\newtheorem{theorem}{Theorem}
\newtheorem{lemma} [theorem]{Lemma}
\newtheorem{proposition} [theorem]{Proposition}
\newtheorem{appendix*}{Appendix}
\newtheorem{remark} [theorem]{Remark}
\newtheorem{example}[theorem]{Example}
\newtheorem{corollary}[theorem]{Corollary}
\let\c@algorithm\c@theorem
\def\e{\epsilon}
\def\id{\text{id}}
\def\ind{\text{ind}}
\def\adj{\text{adj}}
\def\>{\right]}
\def\<{ \left[}
\def\y{\mathbf{y}}
\def\cone{\mathscr{C}}
\def\Zy{\mathcal{Z}_{\mathbf{y}}}
\begin{document}
		\title{Closed-Form Decomposition for Simplicial Cones and PDBarv Algorithm for Lattice Point Counting}
	\author{SIHAO TAO, GUOCE XIN* AND ZIHAO ZHANG}
        \date{}
	\maketitle

	\begin{abstract}
Counting lattice points within a rational polytope is a foundational problem with applications across mathematics and computer science. A key approach is Barvinok's algorithm, which decomposes the lattice point generating function of cones to that of unimodular cones. However, standard implementations face difficulties: the original primal method struggles with points on cone boundaries, while the alternative dual method can be slow for certain cone types.

This paper introduces two main contributions. First, We derive a closed-form expression for these generating functions using arbitrary lattice point decompositions, enabling more effective primal space decomposition. Second,  by decomposing both the cone and its dual cone starting from the side with a smaller index, we develop a novel algorithm called \textup{PDBarv}. This hybrid approach integrates the primal and dual Barvinok algorithms with a novel acceleration strategy, achieving an average computational performance improvement of over 20\% in dimension 5 and even better in higher dimensions.
\vspace{1em}\\
\noindent
    \begin{small}
    \emph{Mathematics subject classification}: Primary 52B20; Secondary 05A15; 68Q25.
    \end{small}

    \noindent{Keywords: Lattice point counting; Rational polyhedra; Barvinok algorithm; Simplicial cone decomposition.}

	\end{abstract}

\section{Introduction}
The problem of counting lattice points in rational polyhedra is a core issue in algebraic combinatorics and computational geometry, with applications in optimization, number theory, and algebraic geometry. This task is particularly difficult due to the complexity of high-dimensional polyhedra and the need for efficient algorithms to handle large-scale computations. Accurate lattice point counting is essential for solving various mathematical and computational problems, making it an area of active research.

In Euclidean space \(\mathbb{R}^d\), a polyhedron is defined as the intersection of a finite number of half-spaces, and when bounded, it is called a polytope. A shifted cone, which has the form \(\{v + k_1 w_1 + \dots + k_n w_n : k_1, \dots, k_n \geq 0\}\), is known as a simplicial cone if the vectors \(w_1, \dots, w_n\) are linearly independent. Our research focuses on counting lattice points in rational polyhedra, where all defining half-spaces are rational.

The problem of counting lattice points in rational polyhedra has a well-known solution in two dimensions using Pick's theorem \cite{Pick_thm}. This theorem states that for simple polygons with integer-coordinate vertices, the area \( S \) can be expressed as \( S = n + \frac{s}{2} - 1 \), where \( n \) is the number of interior lattice points and \( s \) is the number of boundary lattice points. However, such properties do not generally extend to higher-dimensional polyhedra.

Several efficient algorithms exist for computing lattice-point generating functions. A notable one is Barvinok's algorithm \cite{Primal-barvinok}, which runs in theoretical polynomial time when the dimension is fixed. Barvinok's algorithm reduces the problem of counting lattice points in a rational polyhedron to computing the generating functions of simplicial cones, leveraging Brion's theorem~\cite{Brion-Thm-and-dual-trick} and cone triangulations \cite{Triangulations-of-cone}. Barvinok demonstrated that generating functions of simplicial cones can be computed efficiently in polynomial time.

However, the original Barvinok algorithm, which operates in the primal space, faces challenges in handling lattice points on the faces of cones. To address these limitations, Barvinok et al. introduced the dual Barvinok algorithm \cite{Dual-barvinok}. This dual approach exploits cone duality and the Dual trick to eliminate lower-dimensional cones in the dual space after decomposition.
 While this method offers advantages, it can become significantly slower when the index of the primal cone is much smaller than that of the dual cone. While new methods have emerged, they bring with them new challenges, particularly in the realm of high-dimensional problems, which are inherently complex. See Section 2.

In this paper, we establish a closed-form formula for decomposing a simplicial cone in terms of generating functions rather than indicator functions. This formula facilitates the application of the Barvinok algorithm in the primal space for cone decomposition. Moreover, by considering the decomposition of the cone and its dual cone from the side with a smaller index, we design a new algorithm named PDBarv. This algorithm integrates the primal Barvinok algorithm with the dual Barvinok algorithm, resulting in appreciable performance improvements.

The remainder of this paper is organized as follows. Section 2 provides a brief introduction to lattice-point generating functions of polyhedra and the Barvinok algorithm. Section 3 details the relationship between half-open cones and closed cones, presenting the closed-form formula for the decomposition of simplicial cones, supported by proofs from two distinct perspectives. Section 4 describes our algorithm, which is based on the closed-form formula from Section 3 and the strategy of decomposing cones with the smaller index, followed by experimental comparisons. Finally, Section 5 concludes the paper with a discussion of the implications and future directions for this research.

\section{Generating Functions and the Barvinok Algorithm}
In this section, we provide a brief overview of the fundamental concepts related to lattice-point generating functions and the Barvinok algorithm. These concepts serve as the cornerstone for the improvements and the new algorithm that we propose.

\subsection{Generating Functions}

Let \( S \subset \mathbb{R}^{d} \) be a set. We define its \emph{lattice-point generating function} (or \emph{integer-point transform}) as the formal Laurent series
\[
\sigma(S; \mathbf{y}) = \sigma(S; y_{1}, \ldots, y_{d}) := \sum_{\mathbf{m} \in S \cap \mathbb{Z}^{d}} \mathbf{y}^{\mathbf{m}},
\]
where \(\mathbf{y}^{\mathbf{m}} = \prod_{i=1}^d y_i^{m_i}\). When the variables are unambiguous, we abbreviate it as \(\sigma(S)\). This function encodes all lattice points within \( S \), and under suitable convergence conditions, it admits a rational function representation (see Lemma~11.1 in \cite{Beck-book-computing-Discre}).

Throughout this paper, we restrict our attention to \emph{rational polyhedra}, and unless stated otherwise, all polyhedra are assumed to be rational. The enumeration of lattice points in polyhedra frequently reduces to the study of cones. A foundational result of their generating functions is the following:

\begin{lemma}~\cite{Dual-barvinok} \label{lem-line}
If \( P \subset \mathbb{R}^n \) is a rational cone containing a line, then \(\sigma(P) \equiv 0\).
\end{lemma}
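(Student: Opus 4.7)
The plan is to exploit translation invariance. Since $P$ is a rational cone containing a line, I can choose a primitive lattice vector $\mathbf{w} \in \mathbb{Z}^n$ such that both $\mathbf{w}$ and $-\mathbf{w}$ lie in $P$ (take a rational direction along the line and clear denominators). Because $P$ is a cone, it is closed under addition of its own elements, so for every $\mathbf{m} \in P \cap \mathbb{Z}^n$ we have $\mathbf{m} \pm \mathbf{w} \in P \cap \mathbb{Z}^n$. Hence $P \cap \mathbb{Z}^n$ is invariant under translation by $\mathbf{w}$. Re-indexing the defining sum formally gives $\sigma(P;\mathbf{y}) = \mathbf{y}^{\mathbf{w}}\,\sigma(P;\mathbf{y})$, so $(1 - \mathbf{y}^{\mathbf{w}})\,\sigma(P;\mathbf{y}) = 0$; since $1 - \mathbf{y}^{\mathbf{w}}$ is a nonzero Laurent polynomial, this forces $\sigma(P) \equiv 0$.

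The main obstacle is that the series defining $\sigma(P;\mathbf{y})$ does not converge for any choice of $\mathbf{y}$ when $P$ contains a line (the orbit $\{k\mathbf{w} : k \in \mathbb{Z}\}$ already lies in $P$), so I cannot manipulate it as an ordinary power series. To make the argument rigorous, I would invoke the standard extension of $\sigma$ to a translation-equivariant valuation from the algebra of rational polyhedra into the field of rational functions in $\mathbf{y}$, which underlies the Brion–Lawrence–Barvinok theory and is precisely the content of Lemma~11.1 of the Beck reference cited in the excerpt. Under this extension, $\sigma(P + \mathbf{w};\mathbf{y}) = \mathbf{y}^{\mathbf{w}}\sigma(P;\mathbf{y})$ holds as an identity of rational functions, and combining it with $P + \mathbf{w} = P$ yields the claim.

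As an alternative that avoids wheeling in the full valuation machinery, I could decompose $P = P_0 + L$, where $L$ is the lineality space of $P$ and $P_0$ is a pointed cone transverse to $L$, and reduce to the purely one-dimensional computation. For $L = \mathbb{R}\mathbf{w}$ with $\mathbf{w}$ primitive, splitting $\mathbb{Z}\mathbf{w}$ into its nonnegative and strictly negative parts gives $\sigma(L;\mathbf{y}) = \frac{1}{1-\mathbf{y}^{\mathbf{w}}} + \frac{\mathbf{y}^{-\mathbf{w}}}{1-\mathbf{y}^{-\mathbf{w}}} = 0$ as rational functions, and a product formula $\sigma(P) = \sigma(P_0)\,\sigma(L)$ then forces $\sigma(P) \equiv 0$. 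Either route closes the lemma; I expect the paper to use the first (translation-invariance) route because it is cleaner and matches the valuation-theoretic viewpoint used throughout Barvinok's framework.
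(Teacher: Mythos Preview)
The paper does not actually prove this lemma: it is stated with a citation to \cite{Dual-barvinok} and used as a black box (e.g., in the proof of Lemma~\ref{half_and_close_change}), so there is no ``paper's own proof'' to compare against. Your two proposed arguments are both standard and correct; the translation-invariance route via the valuation extension is the one most commonly given in the Barvinok--Pommersheim and Beck--Robins treatments you cite, and your care about the nonconvergence issue is exactly the point that forces one to work with $\sigma$ as a rational-function-valued valuation rather than as a literal power series.
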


For simplicial cones, Ehrhart's formula~\cite{ehrhart1977} provides an explicit representation:

\begin{theorem}
Let \( K := \bigl\{ \sum_{i=1}^d k_i \alpha_i \mid k_i \geq 0 \bigr\} \) be a full-dimensional simplicial cone in \(\mathbb{R}^d\), where \(\alpha_1, \ldots, \alpha_d \in \mathbb{Z}^d\) are primitive vectors, i.e., with greatest common divisor $1$. Then,
\begin{equation}
\sigma(K; \mathbf{y}) = \frac{\sigma(\Pi; \mathbf{y})}{\prod_{i=1}^d (1 - \mathbf{y}^{\alpha_i})},
\end{equation}
where \(\Pi := \bigl\{ \sum_{i=1}^d k_i \alpha_i \mid 0 \leq k_i < 1 \bigr\}\) denotes the \emph{half-open fundamental parallelepiped} of \( K \).
\end{theorem}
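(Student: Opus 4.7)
The plan is to establish Ehrhart's formula by exhibiting an explicit bijection between lattice points in $K$ and pairs consisting of a lattice point in $\Pi$ together with a nonnegative integer tuple indexing a translate of $\Pi$. The product structure of the denominator will then arise from expanding the geometric series in each direction $\alpha_i$.

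First I would use the linear independence of $\alpha_1, \ldots, \alpha_d$ to write every $\mathbf{m} \in K$ uniquely as $\mathbf{m} = \sum_{i=1}^d t_i \alpha_i$ with $t_i \geq 0$. Setting $n_i := \lfloor t_i \rfloor$ and $f_i := t_i - n_i \in [0,1)$, I decompose $\mathbf{m} = \mathbf{p} + \sum_{i=1}^d n_i \alpha_i$ where $\mathbf{p} := \sum_{i=1}^d f_i \alpha_i \in \Pi$. Since each $\alpha_i \in \mathbb{Z}^d$, the point $\sum_i n_i \alpha_i$ lies in $\mathbb{Z}^d$, so $\mathbf{m} \in \mathbb{Z}^d$ if and only if $\mathbf{p} \in \mathbb{Z}^d$. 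Linear independence of the $\alpha_i$ forces uniqueness of $(\mathbf{p}, n_1, \ldots, n_d)$. This establishes the bijection
\[
K \cap \mathbb{Z}^d \;\longleftrightarrow\; (\Pi \cap \mathbb{Z}^d) \times \mathbb{Z}_{\geq 0}^d,\qquad \mathbf{m} \mapsto (\mathbf{p}, n_1, \ldots, n_d).
\]

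Next I would translate this bijection into generating functions. Because $\mathbf{y}^{\mathbf{m}} = \mathbf{y}^{\mathbf{p}} \cdot \prod_{i=1}^d (\mathbf{y}^{\alpha_i})^{n_i}$, summing over $K \cap \mathbb{Z}^d$ and factoring gives
\[
\sigma(K; \mathbf{y}) \;=\; \Bigl(\sum_{\mathbf{p} \in \Pi \cap \mathbb{Z}^d} \mathbf{y}^{\mathbf{p}}\Bigr) \cdot \prod_{i=1}^d \Bigl(\sum_{n_i \geq 0} (\mathbf{y}^{\alpha_i})^{n_i}\Bigr) \;=\; \sigma(\Pi; \mathbf{y}) \cdot \prod_{i=1}^d \frac{1}{1 - \mathbf{y}^{\alpha_i}},
\]
where each geometric series is summed in its region of convergence (equivalently, treated as a formal Laurent series in the sense of Lemma~11.1 of \cite{Beck-book-computing-Discre}, as already referenced in the excerpt). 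Clearing denominators yields the claimed identity.

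The routine linear-algebra step (unique expression in the basis $\alpha_1, \ldots, \alpha_d$) is easy, and the factoring of the geometric series is mechanical. The only genuinely delicate point is the convergence/formal-series subtlety: the series $\sigma(K;\mathbf{y})$ converges on a nonempty open set precisely because $K$ contains no line (so one can choose $\mathbf{y}$ such that $|\mathbf{y}^{\alpha_i}| < 1$ for all $i$), and the rational function on the right then provides the meromorphic continuation. I would dispatch this by appealing to the cited convergence lemma rather than reproving it, so that the argument reduces cleanly to the bijection and the product expansion.
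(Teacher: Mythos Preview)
Your argument is correct and is the standard proof of this classical result. Note, however, that the paper does not actually prove this theorem: it is stated with a citation to Ehrhart~\cite{ehrhart1977} and used as background, so there is no in-paper proof to compare against. Your bijection between $K\cap\mathbb{Z}^d$ and $(\Pi\cap\mathbb{Z}^d)\times\mathbb{Z}_{\ge 0}^d$ via the floor/fractional-part decomposition, followed by factoring the geometric series, is exactly the textbook derivation one would expect here.
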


The cardinality of \( \Pi \cap \mathbb{Z}^d \), called the \emph{index} of \( K \) (denoted \(\mathrm{ind}(K)\)), determines whether \( K \) is unimodular (i.e., \(\mathrm{ind}(K) = 1\)).

A pivotal result in polyhedral combinatorics is Brion's theorem~\cite{Brion-Thm-and-dual-trick}, which decomposes the generating function of a polyhedron into contributions from its tangent cones:

\begin{theorem}
Let \( P \subset \mathbb{R}^n \) be a rational polyhedron. Then,
\begin{equation}
\sigma(P; \mathbf{y}) = \sum_{v \in \mathrm{Vert}(P)} \sigma(K_v; \mathbf{y}),
\end{equation}
where \( K_v := \{ v + t(z - v) \mid z \in P, \, t \geq 0 \} \) is the \emph{tangent cone} of \( P \) at vertex \( v \).
\end{theorem}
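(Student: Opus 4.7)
The plan is to reduce the identity to a linear relation among indicator functions of polyhedra, and then annihilate all but the vertex-tangent-cone terms by means of Lemma~\ref{lem-line}.

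First I would extend the map $P \mapsto \sigma(P; \mathbf{y})$ to a valuation on the vector space generated by indicator functions of rational polyhedra. That is, whenever $\sum_i \alpha_i [P_i] \equiv 0$ pointwise on $\mathbb{R}^n$, the corresponding relation $\sum_i \alpha_i \sigma(P_i; \mathbf{y}) = 0$ should hold as an identity of rational functions. This is standard but delicate: each $\sigma(P_i;\mathbf{y})$ converges as a Laurent series only on an open subset of $(\mathbb{C}^*)^n$ associated with $P_i$'s recession cone, so the identity must be interpreted after analytic continuation to the common field of rational functions.

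Second, I would invoke the Brianchon--Gram identity, which asserts that for a polyhedron $P$,
\begin{equation*}
[P] = \sum_{F} (-1)^{\dim F}\, [K_F],
\end{equation*}
where $F$ ranges over the nonempty faces of $P$ and $K_F$ is the tangent cone at $F$ (defined analogously to $K_v$). If $\dim F \geq 1$, then $K_F$ contains the affine hull of $F$, hence contains a line, so by Lemma~\ref{lem-line} we have $\sigma(K_F;\mathbf{y}) \equiv 0$. Applying the valuation from the first step and using that vertices are precisely the zero-dimensional faces (for which $(-1)^{\dim F}=1$), everything of positive dimension drops out, yielding
\begin{equation*}
\sigma(P;\mathbf{y}) \;=\; \sum_{v \in \mathrm{Vert}(P)} \sigma(K_v;\mathbf{y}),
\end{equation*}
which is precisely Brion's formula. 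For unbounded $P$, one additionally checks that the recession cone contribution fits into the same framework, either by a limiting argument or by handling recession directions as faces at infinity.

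The main obstacle will be establishing the valuation property rigorously, since the series for different $\sigma(K_v;\mathbf{y})$ converge on pairwise disjoint open sets, and turning the pointwise indicator identity into a rational-function identity requires a careful passage through formal Laurent series and analytic continuation. The Brianchon--Gram identity itself is a second nontrivial prerequisite, but it is classical and admits a self-contained proof by induction on dimension or by a sweeping argument with a generic linear functional. Once these two ingredients are in hand, the conclusion reduces to a one-line cancellation driven entirely by Lemma~\ref{lem-line}.
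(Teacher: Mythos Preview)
The paper does not prove this statement; it simply quotes it as Brion's theorem with a citation to~\cite{Brion-Thm-and-dual-trick} and moves on. So there is no ``paper's own proof'' to compare against.

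Your outline is the standard modern route (Lawrence--Khovanskii--Pukhlikov style): pass from polyhedra to the algebra of indicator functions, use the Brianchon--Gram relation $[P]=\sum_F(-1)^{\dim F}[K_F]$, push through the valuation $\sigma(\,\cdot\,;\mathbf y)$, and kill the positive-dimensional face terms with Lemma~\ref{lem-line}. That is correct in spirit and is exactly how several textbook treatments proceed. Two small remarks: first, the paper's Lemma~\ref{lem-line} is stated for \emph{cones} containing a line, whereas the tangent cones $K_F$ for $\dim F\ge 1$ are translates of cones; you should either note that translation by a rational vector only multiplies $\sigma$ by a monomial, or appeal to the slightly more general fact that any rational polyhedron containing a line has $\sigma\equiv 0$. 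Second, your caveat about unbounded $P$ is the right instinct---Brianchon--Gram in the indicator-function form already holds for arbitrary polyhedra, so no separate ``faces at infinity'' argument is needed once you have the valuation property; the recession data is absorbed into the tangent cones themselves.
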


\begin{example}
Consider the interval \( L := [0, 3] \subset \mathbb{R} \). Applying Brion's theorem to its vertices \( \{0, 3\} \) yields:
\begin{equation}
\sigma(L; y) = \frac{1}{1-y} + \frac{y^3}{1-y^{-1}} = 1 + y + y^2 + y^3,
\end{equation}
which matches the direct enumeration of lattice points \( \{0, 1, 2, 3\} \).
\end{example}

\subsection{The Barvinok Algorithm}
The original Barvinok algorithm is a fundamental tool for computing the generating functions of lattice points in rational polyhedra. It operates in primal space and recursively decomposes cones using the following lemma, which is implicitly stated in \cite{Primal-barvinok}.

\begin{lemma}\label{l-cone-dec}
Let $K$ be the cone generated by a nonsingular matrix $A \in \mathbb{Z}^{d \times d}$, and let $[\cdot ]$ denote the indicator function.
If $w = A\alpha$ is an integral vector, where $\alpha = (k_1, \dots, k_d)^T$ has at least one positive entry, then
\begin{equation}\label{dual_bar_formula}
  [\cone(A)] \equiv \sum_{i=1}^{d} \mathrm{sgn}(k_i) [\cone(A_i)] \mod \text{lower-dimensional cones},
\end{equation}
where $A_i$ is the matrix obtained by replacing the $i$-th column of $A$ with $w$.
\end{lemma}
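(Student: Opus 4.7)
The strategy is to reduce to the canonical case of the standard orthant via an invertible linear change of variables, and then verify the identity pointwise through a one-parameter sweep.

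First, I would apply the substitution $x \mapsto c := A^{-1} x$, which preserves indicator functions and the class of lower-dimensional cones. In the new coordinates, $\cone(A)$ becomes the positive orthant $\R_{\geq 0}^d$, each column $a_j$ becomes the standard basis vector $e_j$, the vector $w$ becomes $\alpha$, and $\cone(A_i)$ becomes the cone $C_i$ generated by $\alpha$ together with $\{e_j : j \neq i\}$. When $k_i = 0$, the generators of $C_i$ lie in the hyperplane $\{x_i = 0\}$, so $C_i$ is lower-dimensional and its term drops out modulo lower-dim cones; the sum therefore effectively runs only over indices with $k_i \neq 0$.

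For a generic test point $c \in \R^d$ (one avoiding the finite arrangement of facet hyperplanes of $\R_{\geq 0}^d$ and of the $C_i$'s), Cramer's rule applied to the image of $A_i$, whose determinant equals $k_i$, yields the barycentric coordinates $\lambda_i = c_i/k_i$ and $\lambda_j = (k_i c_j - k_j c_i)/k_i$ for $j \neq i$, so $c \in C_i$ iff all these $\lambda_l$ are nonnegative. The crucial observation is that replacing $c$ by $c(s) := c + s\alpha$ leaves $\lambda_j$ unchanged for every $j \neq i$, while $\lambda_i$ becomes $c_i/k_i + s$. Hence $s \mapsto [C_i](c(s))$ is a Heaviside step function, jumping by $\prod_{l \neq i}[\lambda_l(c) \geq 0]$ at the single critical value $s = -c_i/k_i$.

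The final step is jump matching. The function $s \mapsto [\R_{\geq 0}^d](c(s)) = \prod_i [c_i + s k_i \geq 0]$ is piecewise constant with a jump at each $s = -c_i/k_i$ equal to $\mathrm{sgn}(k_i)\prod_{l \neq i}[\lambda_l(c) \geq 0]$; the sign $\mathrm{sgn}(k_i)$ records whether the $i$-th coordinate hyperplane is crossed from below or above as $s$ increases. Both sides vanish as $s \to -\infty$ — the left-hand side because the hypothesis that some $k_i > 0$ forces $c_i + s k_i < 0$ for $s$ sufficiently negative, and the right-hand side because every summand contains a factor $[s \geq -c_i/k_i]$ which is zero for such $s$. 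Matching jumps at each $s = -c_i/k_i$ therefore forces equality for every $s$, and setting $s = 0$ yields the identity. The main obstacle is verifying that the jump values on the two sides coincide in magnitude and sign; this is a direct but notationally delicate calculation in which the coefficient $\mathrm{sgn}(k_i)$ on the right naturally emerges from the direction in which the sweep crosses the $i$-th coordinate hyperplane, and this constitutes the combinatorial heart of the Lawrence--Barvinok primal decomposition.
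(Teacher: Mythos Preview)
The paper does not actually supply a proof of this lemma: it is stated in Section~2.2 with the comment that it is ``implicitly stated in \cite{Primal-barvinok},'' and then used as background for the Barvinok algorithm. So there is no paper proof to compare against in the strict sense.

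Your argument is correct and self-contained. The reduction by $A^{-1}$ to the standard orthant is legitimate (linear bijections preserve indicator identities and the class of lower-dimensional sets), the Cramer computation of the coordinates $\lambda_j=(k_ic_j-k_jc_i)/k_i$ is right, and the key observation that $\lambda_j$ for $j\neq i$ is invariant under the shift $c\mapsto c+s\alpha$ makes the jump-matching clean. The limit $s\to-\infty$ does use the hypothesis that some $k_i>0$, which you flag. For genericity you only need the crossing values $-c_i/k_i$ pairwise distinct and the $\lambda_j$'s nonzero at those values; both conditions cut out a finite hyperplane arrangement in $c$, so the identity holds off a lower-dimensional set, which is exactly ``modulo lower-dimensional cones.''

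For context, the closest thing the paper does prove is Theorem~\ref{core_thm} (via Proposition~\ref{core_pro}), an exact identity of \emph{generating functions} rather than indicator functions, with explicit half-open facets replacing the ``mod lower-dimensional'' clause. Their proofs are quite different from yours: one is a geometric induction on the number of negative coefficients of $\gamma$, bootstrapping from the interior-point case (Proposition~\ref{int_Dec}) using the half-open/closed conversion of Lemma~\ref{half_and_close_change}; the other is an algebraic partial-fractions identity pushed through the $\mathcal{Z}_{\mathbf y}$ operator. Your sweep argument is the classical Lawrence--Varchenko route and is arguably the shortest way to get the indicator-function statement directly; the paper's machinery instead buys an exact closed-cone formula at the generating-function level, which is what they need for their PDBarv algorithm.
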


In 1994, Barvinok establishes the existence of a \emph{good integer vector} \( w \) with each $|k_i| \leq \ind(K)^{-1/d}$ using the \emph{Minkowski Convex Body Theorem}.
Repeated application of Lemma~\ref{l-cone-dec} using good $w$ then yields the decomposition formula:
\begin{equation} \label{sign_sum_smallcones}
  \sigma(K;\mathbf{y}) = \sum_{i \in I} \epsilon_{i} \sigma(\cone(B_i)) = \sum_{i \in I} \epsilon_{i} \frac{1}{\prod_{j=1}^{d} \left(1 - \mathbf{y}^{\mathbf{b}_{ij}} \right)},
\end{equation}
where each \( \cone(B_i) \) is unimodular, \( \mathbf{b}_{ij} \in \mathbb{Z}^{d} \), \( \epsilon_{i} = \pm 1 \), and the size $|I|$ is bounded by a polynomial in $\log (\ind(K))$.

This process is referred to as the \emph{Original Primal Barvinok Algorithm}. Note that we need to record the proper faces of the intersecting cones.

Due to the complexity of tracking all proper faces of intersecting cones, Barvinok and Pommersheim introduced the \emph{Dual-Barvinok algorithm} \cite{Dual-barvinok}. The primary advancement of this algorithm lies in performing decomposition in dual space: it discards lower-dimensional cones and fully decomposes into unimodular cones before dualizing back.

Building on the above ideas, De Loera et. al. in 2004 utilized the  LLL (Lenstra–Lenstra–Lovász) lattice basis reduction algorithm~\cite{LLL1982} to approximate \( w \), enhancing the practical implementation of Barvinok's method \cite{De-loear-LLLwork-finish-Bar}. Specifically, writing \( w = A\alpha \) as $\alpha = A^{-1}w$. Then $\alpha$ lies in the lattice $L(A^{-1})$ spanned by the columns of $A^{-1}$, and we seek a small vector under the infinite norm $\|\cdot\|_\infty$. We approximate $\alpha$ using the LLL algorithm as follows: first apply LLL to $A^{-1}$ to obtain a reduced basis $\delta_1,\dots, \delta_d$. Each $\delta_i$ then provides a good approximation to the shortest vector in $L(A^{-1})$ under the Euclidean norm $\|\cdot\|_2$. This $\delta_i$ turns out to be also good approximations under the $\|\cdot\|_\infty$-norm. Therefore, we can approximate $\alpha$ by identifying the smallest vector among the $\delta_i$'s. This approach provided a practical implementation that served as the foundation for the \emph{LattE} software package.

Since the Dual-Barvinok algorithm becomes inefficient when $\ind(K)$ is small while $\ind(K^*)$ is large,
Matthias K\"oppe developed the \emph{irrational perturbation method} to address limitations in the original Barvinok algorithm \cite{Primal-irrational}.
This technique constructs a cone's \emph{stability cube} and selects a perturbation vector
to translate cones in primal space, ensuring no integer points lie on the proper faces of intersecting cones.
Although innovative, it encountered difficulties in constructing such  vectors, lacked reusability, and could not handle counting problems for parametric polyhedra.

To overcome these limitations, K\"oppe and Verdoolaege developed a half-open cone decomposition method \cite{Half-open-dec},
establishing a classification model that enhances the computation of lattice-point generating functions for parametric polytopes.

In fact, from the generating function perspective,  \emph{half-open cones} and \emph{closed cones} are mutually convertible for simplicial cones. In the next section, we present a novel approach to decomposing a simplicial cone into a signed sum of simplicial cones.

\section{A Unified Simplicial Cone Decomposition Formula}
We introduce a novel decomposition formula for simplicial cones that incorporates an additional vector $\gamma$. This formula decomposes the generating function of a simplicial cone into a signed sum of generating functions for other simplicial cones, forming the foundation for our proposed PDBarv algorithm. We provide two distinct proofs: one geometric and one algebraic.

\subsection{The Decomposition Formula}
Let $A = (\alpha_1, \alpha_2, \ldots, \alpha_n)$ be a matrix in $\mathbb{R}^{d \times n}$ of rank $n$. For $v \in \mathbb{R}^d$ and $\theta \subset [n]$, define
\begin{equation}\label{eq-defK}
    K = \cone^{\theta}(A; v) := v + \left\{ \sum_{i=1}^n k_i \alpha_i \;:\; k_i \geq 0 \text{ for } i \notin \theta \text{ and } k_i > 0 \text{ for } i \in \theta \right\}.
\end{equation}
We call $K$ a half-open simplicial cone shifted at the vertex $v$, with open facets indexed by the set $\theta$. When $\theta$ is empty, we omit the superscript $\theta$, and when $v = \mathbf{0}$, we omit $v$.

For a vector $\gamma \in \mathbb{R}^d$, denote by $A[(i \to \gamma)]$ the matrix obtained by replacing the $i$-th column of $A$ with $\gamma$. In particular, $A[(i \to l\alpha_i)]$ is abbreviated as $A[(i \to l)]$ for $l \in \mathbb{Q}$, and $A[(i\to 0)] = A[(i \to \mathbf{0})]$ denotes the matrix with the $i$-th column removed. More generally, $A[(i_1, \ldots, i_k) \to (\gamma_1, \ldots, \gamma_k)]$ is defined analogously, while $A[(i_1, \ldots, i_k) \to l]$ denotes the matrix with columns $i_1,\ldots,i_k$ multiplied by $l$.

We now present the main theorem, which constitutes the core contribution of our work and establishes the foundation for subsequent developments.
\begin{theorem}\label{core_thm}
Let $K = \cone(\alpha_1, \dots, \alpha_n)$ be a simplicial cone. Given a vector $\gamma=p_{1}\alpha_{1}+\cdots+p_{k}\alpha_{k}-p_{k+1}\alpha_{k+1}-\dots-p_{r}\alpha_{r}$,
where $p_i \in \mathbb{Q}_{>0}$ for all $i$ and $k \in \{0,1,\ldots,r\}$, we have a decomposition of $\sigma(K)$ into closed simplicial cones
 as follows:
\begin{equation}\label{core-closed-equation}
\sigma(K) = \sum_{i=1}^{k} \sigma(\cone(A_i)) (-1)^{i-1} + \sum_{j=k+1}^{r} \sigma(\cone(A_j)) (-1)^{j-(k+1)},
\end{equation}
where $A_i = A[(1,\ldots,i) \to (-1,\ldots,-1,\gamma)]$ and $A_j = A[(k+1,\ldots,j) \to (-1,\ldots,-1,-\gamma)]$.
\end{theorem}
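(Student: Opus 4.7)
The plan is to prove the identity by induction on $r$, combining Lemma~\ref{l-cone-dec} (Barvinok's decomposition) with a ``flip identity'' derived from Lemma~\ref{lem-line}. For the base case $r=1$, $\gamma = \pm p_1 \alpha_1$ is a positive scaling of $\pm\alpha_1$, so $\cone(A_1)$ coincides with $K$ as a set and $\sigma(K) = \sigma(\cone(A_1))$ holds immediately.

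The key auxiliary tool is the following flip identity: for any simplicial cone with generators $\beta_1,\ldots,\beta_n$ and any index $j$, the union $\cone(\beta_1,\ldots,\beta_j,\ldots,\beta_n) \cup \cone(\beta_1,\ldots,-\beta_j,\ldots,\beta_n)$ contains the line $\mathbb{R}\beta_j$, and so has vanishing generating function by Lemma~\ref{lem-line}. Since the intersection of these two cones is precisely the common facet $\cone(\beta_1,\ldots,\hat{\beta}_j,\ldots,\beta_n)$ (in $\beta$-coordinates, the locus $c_j=0$, $c_i\ge 0$ for $i\ne j$), inclusion-exclusion yields
\[
\sigma(\cone(\ldots,\beta_j,\ldots)) + \sigma(\cone(\ldots,-\beta_j,\ldots)) = \sigma(\cone(\ldots,\hat{\beta}_j,\ldots)),
\]
allowing us to replace any column $\beta_j$ by $-\beta_j$ at the cost of a lower-dimensional facet term.

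For the inductive step, I would apply Lemma~\ref{l-cone-dec} to $K$ with $w=\gamma$ and coefficient vector $(p_1,\ldots,p_k,-p_{k+1},\ldots,-p_r,0,\ldots,0)^{\top}$, decomposing $\sigma(K)$ as a signed sum of $\sigma(\cone(A[(i)\to\gamma]))$ together with lower-dimensional corrections from the mutual intersections. I would then iteratively apply the flip identity to each such cone in order to convert the unchanged columns $\alpha_1,\ldots,\alpha_{i-1}$ into their negatives $-\alpha_1,\ldots,-\alpha_{i-1}$ (and analogously for the second sum). The alternating signs $(-1)^{i-1}$ and $(-1)^{j-k-1}$ in the theorem should emerge from the nested flips, while the facet terms generated by successive flips cancel against the lower-dimensional corrections left behind by the Barvinok step. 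The main obstacle I anticipate is the delicate sign and dimension bookkeeping needed to verify that these cancellations work out exactly, and this is where the proof becomes nontrivial even though every individual ingredient is elementary. An alternative algebraic proof would express each $\sigma(\cone(A_l))$ via Ehrhart's formula as $\sigma(\Pi_l)/\prod(1-\mathbf{y}^{\beta})$ and reduce the claim to a rational-function identity, trading the geometric telescoping for a direct analysis of how the fundamental parallelepipeds transform under the column substitutions $A\mapsto A_l$.
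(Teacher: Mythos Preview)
Your flip identity is exactly Lemma~\ref{half_and_close_change}(iii) in the paper, and it is the right bridge between half-open and closed cones. The gap in your main line of attack is the starting point: Lemma~\ref{l-cone-dec} is a statement about \emph{indicator functions modulo lower-dimensional cones}, not about $\sigma$. Passing to $\sigma$ requires knowing the lower-dimensional corrections explicitly, and these are not supplied by the lemma. Your hope that ``the facet terms generated by successive flips cancel against the lower-dimensional corrections left behind by the Barvinok step'' is precisely the content of Theorem~\ref{core_thm}, so invoking it without an independent derivation of those corrections is circular. For $k=r$ (interior $\gamma$) the corrections come from plain inclusion--exclusion and your plan does go through; once negative coefficients appear, however, the overlaps among the signed cones $\pm[\cone(A[(i)\to\gamma])]$ no longer reduce to a simple Euler-characteristic count, and your induction on $r$ gives no mechanism for producing them.

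The paper sidesteps this by working with half-open cones throughout. Its base case is not $r=1$ but $m=0$ (all coefficients positive), where Proposition~\ref{int_Dec} gives a \emph{disjoint} partition $K=\bigsqcup_{i=1}^{r}\cone^{[i-1]}(A[(i\to\gamma)])$ with nothing to correct. The induction is on $m=r-k$: one applies the inductive hypothesis to $\hat A=A[(k)\to -1]$ with the same $\gamma$, adds the two formulas, and uses the flip identity together with a separate low-dimensional telescoping (Lemma~\ref{thm_lowd_dec}) to collapse both sides to the common facet cone. Only at the end does Lemma~\ref{half_and_close_change} convert half-open to closed, which is where the signs $(-1)^{i-1}$ and $(-1)^{j-k-1}$ appear. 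Your suggested algebraic alternative is also carried out in the paper, but via the operator $\mathcal{Z}_{\mathbf{y}}$ of Proposition~\ref{thm_no_PI} rather than Ehrhart numerators: this reduces the theorem to the elementary partial-fraction identity of Lemma~\ref{dec_rational_function}, entirely avoiding any comparison of fundamental parallelepipeds.
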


\begin{remark}
To work in the dual space, \eqref{core-closed-equation} requires modification such that only generators $\alpha_i$ and $\gamma$ appear. The resulting expression then matches that in Lemma \ref{l-cone-dec}, with $\sigma$ replaced by the indicator function. Without this adjustment, dualizing \eqref{core-closed-equation} only yields an equality when modulo lower-dimensional cones.
\end{remark}

\subsection{Geometric Proof}
We first establish a fundamental relationship between half-open and closed cones, which is pivotal to our cone decomposition methodology.
\begin{lemma}\label{half_and_close_change}
From the generating function view, half-open cones and closed cones are mutually convertible. Precisely,
let $A = (\alpha_1, \alpha_2, \ldots, \alpha_n)$ generate the rational simplicial cone $\cone(A)$. Then the following identities hold:
\begin{enumerate}
    \item[\textnormal{(i)}] $\sigma(\cone^{\{i\}}(A)) = -\sigma(\cone(A[(i \to -1)]))$,
    \item[\textnormal{(ii)}] $\sigma(\cone^{\theta}(A)) = (-1)^{|\theta|} \sigma(\cone(A[(\theta \to -1)]))$,
    \item[\textnormal{(iii)}] $\sigma(\cone(A)) + \sigma(\cone(A[(i \to -1)])) = \sigma(\cone(A[(i \to \mathbf{0})]))$.
\end{enumerate}
\end{lemma}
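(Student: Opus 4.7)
The plan is to derive all three identities from the Line Lemma (Lemma~\ref{lem-line}) applied to suitable auxiliary cones that contain a line. The unifying idea is that extending the $i$-th parameter $k_i$ to range over all of $\mathbb{R}$ produces a region containing the line $\mathbb{R}\alpha_i$, whose generating function must therefore vanish.

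For (i), I would introduce the auxiliary set
\[
R_i := \Bigl\{ \textstyle\sum_{j\neq i} k_j \alpha_j + t\alpha_i : k_j \geq 0 \text{ for } j\neq i,\ t \in \mathbb{R} \Bigr\}.
\]
Setting all $k_j = 0$ for $j \neq i$ shows that $R_i$ contains the line $\mathbb{R}\alpha_i$, so $\sigma(R_i) \equiv 0$ by Lemma~\ref{lem-line}. Next I would split the integer points of $R_i$ disjointly according to the sign of $t$: those with $t > 0$ form exactly $\cone^{\{i\}}(A) \cap \mathbb{Z}^d$, while those with $t \leq 0$ correspond, via the substitution $k'_i = -t \geq 0$, to the integer points of $\cone(A[(i \to -1)])$. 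Adding the two generating functions and equating to $\sigma(R_i) = 0$ immediately gives identity (i).

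For (ii), I would proceed by induction on $|\theta|$. The base case $|\theta|=0$ is trivial. For the inductive step, pick any $i \in \theta$, and apply (i) to "close" the $i$-th facet at the cost of replacing $\alpha_i$ by $-\alpha_i$ and picking up a sign, reducing $|\theta|$ by one; iterating $|\theta|$ times produces the factor $(-1)^{|\theta|}$ and the matrix $A[(\theta \to -1)]$. (One must verify that applying (i) commutes with the presence of the other open facets, which is clear since the Line Lemma argument only concerns the $i$-th coordinate.)

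For (iii), I would partition $\cone(A)$ along the facet $k_i = 0$:
\[
\cone(A) = \cone^{\{i\}}(A) \ \sqcup\ \cone(A[(i \to \mathbf{0})]),
\]
so that $\sigma(\cone(A)) = \sigma(\cone^{\{i\}}(A)) + \sigma(\cone(A[(i \to \mathbf{0})]))$. Substituting (i) and rearranging yields (iii). The main obstacle is a bookkeeping one rather than a conceptual one: one must carefully verify that the sets of integer points decompose disjointly under each of the $k_i > 0$, $k_i = 0$, $k_i < 0$ cases, and that the substitution $k'_i = -t$ correctly identifies the negative-$t$ half with the closed cone generated by $-\alpha_i$ and the remaining $\alpha_j$.
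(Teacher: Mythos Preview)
Your proposal is correct and follows essentially the same approach as the paper: both proofs introduce the auxiliary region $R_i = K'$ containing the line $\mathbb{R}\alpha_i$, invoke Lemma~\ref{lem-line} to conclude $\sigma(R_i)=0$, and then split into two pieces. The only cosmetic difference is where the split is placed: you partition $R_i$ at $t>0$ versus $t\le 0$ (yielding $\cone^{\{i\}}(A)$ and $\cone(A[(i\to -1)])$ directly), whereas the paper partitions at $k_i\ge 0$ versus $k_i<0$ (yielding $\cone(A)$ and $\cone^{\{i\}}(A[(i\to -1)])$, then swaps $A\leftrightarrow A[(i\to -1)]$); similarly for (iii) you decompose $\cone(A)$ along its $i$-th facet while the paper decomposes $\cone(A[(i\to -1)])$, and both derive (ii) by iterating (i).
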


\begin{proof}[Proof]
For part (i), by the definition of a half-open cone, we have
\[
\sigma(\cone(A)) + \sigma(\cone^{\{i\}}(A[(i \to -1)])) = \sigma(K^{\prime}),
\]
where
\[
K^{\prime} = \left\{ \sum_{j=1}^n k_j \alpha_j : k_j \geq 0 \text{ for } j \neq i, \text{ and } k_i \in \mathbb{R} \right\}.
\]
This is a cone containing the line generated by $\alpha_i$. The first part then follows from Lemma \ref{lem-line}.

Part (ii) follows by repeatedly applying part (i).

For part (iii), we use part (i) and the observation that
\[
\sigma(\cone(A[(i \to -1)])) = \sigma(\cone^{\{i\}}(A[(i \to -1)])) + \sigma(\cone(A[(i \to \mathbf{0})])).
\]
This completes the proof.
\end{proof}

Lemma~\ref{half_and_close_change} allows us to translate Theorem~\ref{core_thm} in terms of half-open cones.
\begin{proposition}\label{core_pro}
Let \(K\) and \(\gamma\) be as in Theorem \ref{core_thm}, then we have
\begin{equation}\label{core_formula_half}
\sigma(K)=\sum_{i=1}^{k}\sigma(\cone^{[i-1]}(A[(i\to\gamma)])))+\sum_{j=k+1}^{r}\sigma(\cone^{\{k+1,\ldots,j-1\}}(A[(j\to-\gamma)])).
\end{equation}
\end{proposition}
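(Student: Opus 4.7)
My plan is to prove the proposition geometrically, decomposing $K$ one direction at a time and using Lemma~\ref{half_and_close_change} to manage the half-open bookkeeping. The substantive combinatorics happens in the $r$ directions actually involved in $\gamma$; the remaining generators $\alpha_{r+1},\ldots,\alpha_n$ appear identically in every cone on both sides of the formula, so I would set them aside and focus on the effective $r$-dimensional decomposition.

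I would first dispose of the two pure cases $k=r$ and $k=0$. Here $\gamma$ (respectively $-\gamma$) lies in the interior of $\cone(\alpha_1,\ldots,\alpha_r)$, and the cones on the right-hand side—with their prescribed half-open decorations $[i-1]$ (respectively $\{1,\ldots,j-1\}$)—form the standard half-open triangulation of $K$ from this interior ray. The decorations are exactly what is required to make the pieces pairwise disjoint while covering $K$, so the identity follows from additivity of $\sigma$ over a disjoint partition.

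The real work is the mixed case $0<k<r$. The prototype $r=2$, $k=1$ displays the key ingredients: with $\gamma=p_1\alpha_1-p_2\alpha_2$, the relation $\alpha_1=(\gamma+p_2\alpha_2)/p_1\in\cone(\gamma,\alpha_2)$ yields the disjoint geometric decomposition
\[
\cone(\gamma,\alpha_2) \;=\; \cone(\alpha_1,\alpha_2) \;\sqcup\; \cone^{\{1\}}(\gamma,\alpha_1),
\]
hence $\sigma(K)=\sigma(\cone(\gamma,\alpha_2))-\sigma(\cone^{\{1\}}(\gamma,\alpha_1))$; Lemma~\ref{half_and_close_change}(i) then rewrites the negated half-open term as $\sigma(\cone(\alpha_1,-\gamma))$, giving exactly the claimed formula. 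For general $r,k$ I would iterate this two-generator step along pairs $(\alpha_i,\alpha_j)$ with $i\leq k<j$: each iteration peels off one term from the first sum and one term from the second sum, and the strict facets accumulated by the nested half-open conversions assemble into the prescribed decorations $[i-1]$ and $\{k+1,\ldots,j-1\}$.

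I expect the main obstacle to be the bookkeeping of decorations under this iteration: each application of Lemma~\ref{half_and_close_change}(i) introduces one strict facet together with a sign, and one must verify that after $i-1$ or $j-k-1$ conversions these assemble precisely into $[i-1]$ or $\{k+1,\ldots,j-1\}$ rather than some other subset. A secondary challenge is ordering the iteration so that every residual cone is eventually identified with one of the listed terms on the right-hand side, with no stray pieces left over; I expect Lemma~\ref{lem-line} to enter the argument only implicitly, through its role in the proof of Lemma~\ref{half_and_close_change}.
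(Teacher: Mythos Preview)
Your base cases $k=r$ and $k=0$ are correct and coincide with Proposition~\ref{int_Dec}, and your $r=2$, $k=1$ computation is fine. The gap is the passage to general $r,k$. Once $r>2$ the vector $\gamma$ involves more than two of the $\alpha_i$, so there is no ``two-generator step on a pair $(\alpha_i,\alpha_j)$'' to iterate: your $r=2$ argument hinged on the containment $\alpha_1\in\cone(\gamma,\alpha_2)$, and no analogous two-dimensional containment holds when $\gamma$ carries additional directions. The bookkeeping claim that each iteration peels off one term from each sum is also structurally off, since the two sums have $k$ and $r-k$ terms respectively. So the obstacle is not the decoration bookkeeping you anticipate, but that the iteration itself is undefined beyond $r=2$.

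The paper's route is a different one-parameter induction: induct on the number $m=r-k$ of negative coefficients in $\gamma$, with base case $m=0$ being Proposition~\ref{int_Dec}. For the inductive step one does not decompose $K$ directly; instead one applies the inductive hypothesis to the auxiliary cone $\hat K=\cone(\alpha_1,\ldots,\alpha_{k-1},-\alpha_k,\alpha_{k+1},\ldots,\alpha_n)$, for which the \emph{same} $\gamma$ has one fewer negative coefficient. Adding the known identity for $\hat K$ to the target identity for $K$ and simplifying both sides via Lemma~\ref{half_and_close_change} collapses most terms in matched pairs; what remains is an equality that is precisely Lemma~\ref{thm_lowd_dec}, the decomposition of a simplicial cone into one half-open copy plus lower-dimensional boundary faces. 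That lemma is the missing ingredient your plan never invokes, and it is what replaces the pairwise iteration you were reaching for.
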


Before giving the proof, let us consider the case when $\gamma$ is an interior point. The decomposition in~\eqref{e-interior} below is likely known, but we provide a proof in Appendix \ref{appendix1} for completeness.

\begin{proposition}\label{int_Dec}
Let $A = (\alpha_1, \alpha_2, \ldots, \alpha_n)$ generate an $n$-dimensional rational simplicial cone $K = \cone(A)$ in $\mathbb{R}^d$. For any $\gamma = \sum_{i=1}^r k_i\alpha_i \in K$ with $k_i > 0$,
\begin{equation}\label{e-interior}
    \sigma(K) = \sum_{i=1}^r \sigma(K_i),
\end{equation}
where $K_i = \cone^{[i-1]}\left(A[(i \to \gamma)]\right)$.
\end{proposition}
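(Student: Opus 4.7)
The plan is to establish the set-theoretic identity $K = \bigsqcup_{i=1}^{r} K_i$ in $\mathbb{R}^d$, from which the generating function identity follows by restricting to lattice points and summing indicator functions.

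First, I would set up a classification map for points of $K$. Any $x \in K$ has a unique expression $x = \sum_{j=1}^{n} c_j \alpha_j$ with $c_j \geq 0$, since $\{\alpha_1, \ldots, \alpha_n\}$ is a basis. Let $i^{\star} \in [r]$ be the smallest index attaining $\min_{j \in [r]} c_j / k_j$, set $t = c_{i^{\star}}/k_{i^{\star}}$, and define $s_j = c_j - t k_j$ for $j \in [r] \setminus \{i^{\star}\}$ and $s_j = c_j$ for $j > r$. A direct algebraic check gives $x = \sum_{j \neq i^{\star}} s_j \alpha_j + t\gamma$, providing the candidate representation of $x$ in the basis $\{\alpha_1, \ldots, \alpha_{i^{\star}-1}, \gamma, \alpha_{i^{\star}+1}, \ldots, \alpha_n\}$.

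Next, I would verify that this representation actually places $x$ inside $K_{i^{\star}} = \cone^{[i^{\star}-1]}(A[(i^{\star} \to \gamma)])$. By the minimality of $i^{\star}$, we have $c_j/k_j > c_{i^{\star}}/k_{i^{\star}}$ for $j < i^{\star}$, giving $s_j > 0$ strictly, which matches the open facets indexed by $[i^{\star}-1]$. For $i^{\star} < j \leq r$ the corresponding ratio inequality is non-strict, giving $s_j \geq 0$; for $j > r$ we have $s_j = c_j \geq 0$; and $t \geq 0$ is automatic. Conversely, the inclusion $K_i \subseteq K$ is immediate by expanding $\gamma$ inside a generic point of $K_i$ and noting every $\alpha_\ell$-coefficient becomes non-negative.

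For disjointness, suppose $x \in K_i \cap K_{i'}$ with $i < i'$. The uniqueness of coordinates in each basis would simultaneously force $s_i > 0$ from the $K_{i'}$-expression (since $i \in [i'-1]$ indexes an open facet of $K_{i'}$), giving $c_i/k_i > c_{i'}/k_{i'}$, and $s_{i'} \geq 0$ from the $K_i$-expression, giving $c_{i'}/k_{i'} \geq c_i/k_i$, a contradiction. The main obstacle is matching the tie-breaking rule to the open-facet convention $\theta = [i-1]$: the choice of the \emph{smallest} minimizer is what converts ties into the correct strict/non-strict pattern for the $s_j$'s. A secondary subtlety, when $r < n$, is that the trailing generators $\alpha_{r+1}, \ldots, \alpha_n$ do not appear in $\gamma$ but remain part of every modified basis, so their coefficients $c_j = s_j$ must be tracked in order to confirm both $K \subseteq \bigcup K_i$ and $K_i \subseteq K$.
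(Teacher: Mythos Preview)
Your proof is correct and follows essentially the same approach as the paper: both establish the set-theoretic partition $K=\bigsqcup_{i=1}^{r}K_i$ by assigning each point to the subcone indexed by the \emph{smallest} minimizer of the ratios $c_j/k_j$, then read off the strict/non-strict coefficient pattern. The only cosmetic differences are that the paper first normalizes to $k_i=1$ before taking the minimum, and that you spell out the disjointness and the $r<n$ tail more explicitly than the paper does.
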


\begin{example}\label{ex:3d-decomp}
The three-dimensional case provides a visual illustration. Consider $K = \cone(\alpha_1, \alpha_2, \alpha_3)$ and $\gamma = \alpha_1 + \alpha_2 + \alpha_3$. The cross-section shows:
\begin{itemize}
    \item Three subcones $K_1, K_2, K_3$ partitioning $K$
    \item Dashed boundaries indicating open facets in each $K_i$
    \item The central point $\gamma$ serving as the decomposition pivot
\end{itemize}

\begin{figure}[H]
    \centering
    \includegraphics[width=0.5\linewidth]{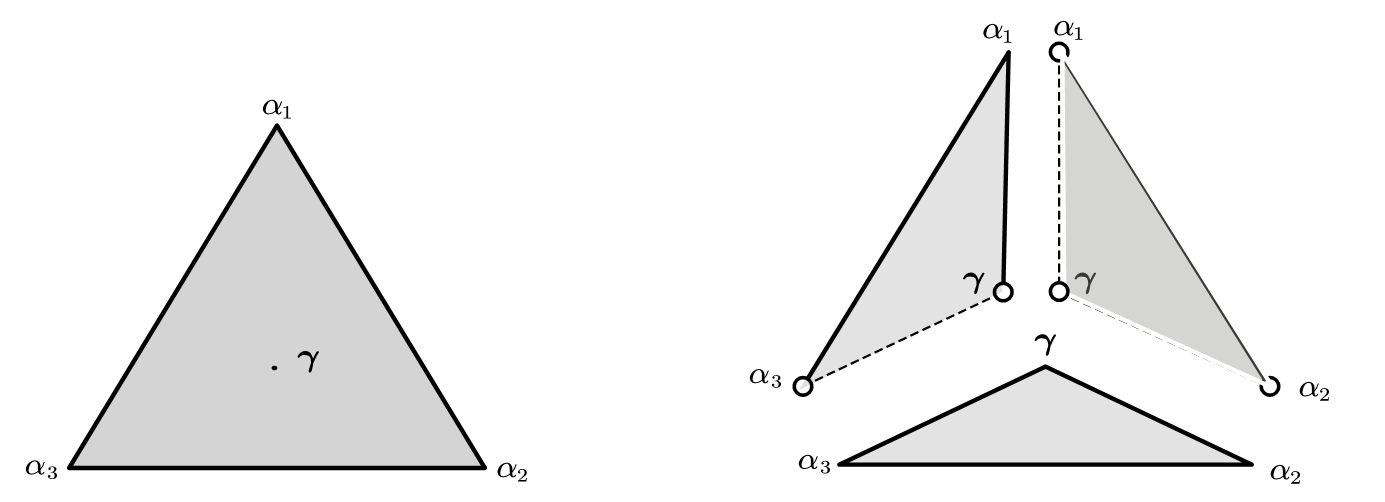}
    \caption{Cross-section of 3D cone decomposition. Subcones $K_i$ are colored distinctly, with dashed edges denoting open facets.}
    \label{fig:3d-decomp}
\end{figure}
\end{example}

We require an additional decomposition involving lower-dimensional cones, proved in Appendix \ref{appendix2}.
\begin{lemma}\label{thm_lowd_dec}
Let $A = (\alpha_1, \ldots, \alpha_n)$ generate an $n$-dimensional rational simplicial cone $K = \cone(A)$. Then
\begin{equation}
    \sigma(K) = \sum_{i=1}^{r-1} \sigma(F_i) + \sigma\left(\cone^{[r-1]}(A)\right),
\end{equation}
where $F_i = \cone^{[i-1]}\big(A[(i \to 0)]\big)$ for $1 \leq i \leq r-1$.
\end{lemma}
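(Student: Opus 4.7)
The plan is to prove Lemma \ref{thm_lowd_dec} by exhibiting an explicit partition of the closed cone $K$ into $r$ pairwise disjoint pieces whose generating functions are exactly the $r-1$ lower-dimensional terms $\sigma(F_i)$ and the half-open term $\sigma(\cone^{[r-1]}(A))$. Since $\sigma$ is additive on disjoint unions of sets (at the level of lattice points), the identity will follow immediately.

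Concretely, I would stratify a point $\sum_{j=1}^n k_j \alpha_j$ of $K$ by the smallest index $i \in \{1,\ldots,r-1\}$ for which $k_i = 0$, with the convention that if no such index exists (i.e.\ $k_1,\ldots,k_{r-1}$ are all strictly positive) the point is placed in the leftover stratum. Explicitly, for $1 \le i \le r-1$ let
\begin{equation*}
S_i = \Bigl\{\textstyle\sum_{j=1}^n k_j\alpha_j \in K \;:\; k_1,\ldots,k_{i-1} > 0,\ k_i = 0,\ k_{i+1},\ldots,k_n \ge 0\Bigr\},
\end{equation*}
and let $S_r$ consist of the points with $k_1,\ldots,k_{r-1} > 0$ and $k_r,\ldots,k_n \ge 0$. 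Then $K = S_1 \sqcup \cdots \sqcup S_r$ is a disjoint partition (every element of $K$ falls into exactly one class by a first-zero-coordinate argument, using that the $\alpha_j$ are linearly independent so the coefficients $k_j$ are unique).

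Next I would identify each stratum with the claimed cone. By the definition in \eqref{eq-defK}, the set $S_i$ with $1 \le i \le r-1$ is precisely $\cone^{[i-1]}\!\bigl(A[(i \to 0)]\bigr) = F_i$, because the condition $k_i = 0$ is what the substitution $\alpha_i \mapsto \mathbf 0$ enforces, and the remaining open/closed pattern on the other generators matches $\theta = [i-1]$. Similarly, $S_r$ is exactly $\cone^{[r-1]}(A)$. Passing to integer points and summing $\mathbf y^{\mathbf m}$ then yields
\begin{equation*}
\sigma(K) = \sum_{i=1}^{r-1}\sigma(S_i) + \sigma(S_r) = \sum_{i=1}^{r-1}\sigma(F_i) + \sigma\bigl(\cone^{[r-1]}(A)\bigr),
\end{equation*}
which is the desired identity.

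There is no real obstacle here; the whole argument is a clean first-hitting-zero stratification of $K$. The only point that deserves explicit mention is that the decomposition of each point of $K$ into coefficients $k_1,\ldots,k_n$ is unique (which is why the strata are disjoint), and this is guaranteed by the hypothesis that $A$ generates a simplicial cone, i.e.\ the $\alpha_j$ are linearly independent. One could alternatively give an inductive proof by repeatedly applying part (iii) of Lemma~\ref{half_and_close_change} to peel off one generator at a time, but the partition argument is more transparent and avoids the bookkeeping of signed sums.
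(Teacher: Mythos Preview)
Your proposal is correct and essentially identical to the paper's own proof: the paper also partitions $K$ by the first index $i\le r-1$ with $t_i=0$, identifies each stratum with $F_i$, and assigns the remaining points (all $t_1,\dots,t_{r-1}>0$) to $\cone^{[r-1]}(A)$. Your explicit remark that uniqueness of the coefficients (hence disjointness of the strata) rests on the simpliciality of $K$ is a useful addition.
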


We now prove Proposition \ref{core_pro}.
\begin{proof}[Proof of Proposition \ref{core_pro}]
Without loss of generality, we may assume that $\gamma$ is of the form $\alpha_{1}+\cdots+\alpha_{k}-\alpha_{k+1}-\dots-\alpha_{r}$. Let $m$ count the negative coefficients in $\gamma$. We proceed by induction on $m$.

The base case $m = 0$ follows from Proposition \ref{int_Dec}.

Assume the result holds for $m = r - k$. For $m=r-k+1$ ($\gamma=\alpha_{1}+\cdots+\alpha_{k-1}-\alpha_{k}-\dots-\alpha_{r}$), we must show
 \begin{equation}\label{e-k-gamma}
\sigma(K)=\sum_{i=1}^{k-1}\sigma(\cone^{[i-1]}(A[(i\to\gamma)])))+\sum_{j=k}^{r}\sigma(\cone^{\{k,\ldots,j-1\}}(A[(j\to-\gamma)])).
\end{equation}

Apply the induction hypothesis to $\hat{K}=\cone(\hat{A})$
with respect to $\gamma =\alpha_{1}+\cdots+(-\alpha_{k})-\alpha_{k+1}-\dots-\alpha_{r}$, where $\hat{A}=(\alpha _{1},\ldots \alpha _{k-1},-\alpha _{k},\alpha_{k+1},\dots,\alpha _{n})$. We obtain
\begin{equation} \label{e-k1-gamma}
\sigma(\hat K)=\sum_{i=1}^{k}\sigma(\cone^{[i-1]}(\hat{A}[(i\to\gamma)]))+\sum_{j=k+1}^{r}\sigma(\cone^{\{k+1,\ldots,j-1\}}(\hat{A}[(j\to-\gamma)])).
\end{equation}

It suffices to show the truth of equality obtained by taking the sum of the two equations~\eqref{e-k-gamma} and \eqref{e-k1-gamma}.
By Lemma \ref{half_and_close_change}, we have
\begin{align*}
  &\sigma(K)+ \sigma(\hat{K})= \sigma(\cone\left(\alpha _{1},\ldots \alpha _{k-1},\alpha_{k+1},\dots,\alpha _{n}\right)), \\
  &\sigma(\cone^{\{k,\ldots,j-1\}}(A[(j\to-\gamma)]))+\sigma(\cone^{\{k+1,\ldots,j-1\}}(\hat{A}[(j\to-\gamma)]))=0, \text{ when }j>k,\\
  &\sigma(\cone^{[i-1]}(A[(i\to\gamma)]))+\sigma(\cone^{[i-1]}(\hat{A}[(i\to\gamma)]))=\sigma(\cone^{[i-1]}(A[(k,i)\to(\mathbf{0},\gamma)])), \text{ when }i<k.
\end{align*}
The first equation gives the left-hand side, and the other two are used to simplify the right-hand side. We obtain
$$\sum_{j=k}^{r}\sigma(\cone^{\{k,\ldots,j-1\}}(A[(j\to-\gamma)]))+
\sum_{j=k+1}^{r}\sigma(\cone^{\{k+1,\ldots,j-1\}}(\hat{A}[(j\to-\gamma)]))=\sigma(\cone(A[(k\to-\gamma)])),$$
and similarly,
\begin{align*}
   & \sum_{i=1}^{k-1}\sigma(\cone^{[i-1]}(A[(i\to\gamma)]))+\sum_{i=1}^{k}\sigma(\cone^{[i-1]}(\hat{A}[(i\to\gamma)])) \\
  = & \sum_{i=1}^{k-1}\sigma(\cone^{[i-1]}(A[(k,i)\to(\mathbf{0},\gamma)]))+\sigma(\cone^{[k-1]}(A[(k\to\gamma)])) \\
  =&\sigma(\cone(A[(k\to\gamma)])).  \text{ \quad    (by Lemma \ref{thm_lowd_dec})    }
\end{align*}
Since, by Lemma~\ref{half_and_close_change} again,
\begin{equation}\label{RHS}
\sigma(\cone(A[(k\to\gamma)]))+\sigma(\cone(A[(k\to-\gamma)]))=\sigma(\cone\left( \alpha _{1},\ldots \alpha _{k-1},\alpha_{k+1},\dots,\alpha _{n}\right)),
\end{equation}
the proposition holds.
\end{proof}

\begin{example}
We demonstrate the decomposition for $\sigma(\cone(\alpha_{1}, \alpha_{2}))$ in $\mathbb{R}^2$ using carefully chosen points $\gamma_{1},\gamma_{2},\gamma_{3},\gamma_{4}$. Their geometric configuration relative to the cone generators is shown in Figure~\ref{fig:gamma_config}.
\end{example}

\begin{figure}[h]
    \centering
    \includegraphics[width=0.3\linewidth]{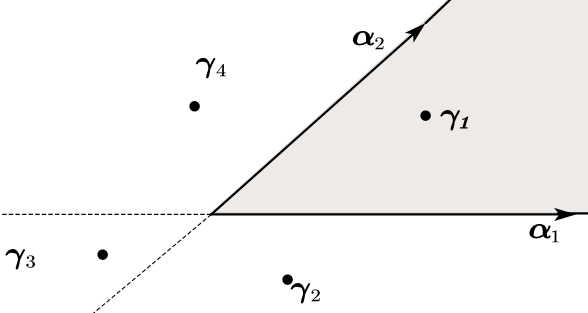}
    \caption{Positions of points $\gamma_i$ relative to $\cone(\alpha_1,\alpha_2)$}
    \label{fig:gamma_config}
\end{figure}

Employing $\gamma_{1}, \gamma_{2}, \gamma_{3}$, and $\gamma_{4}$, the generating function decompositions given by Proposition~\ref{core_pro} are:
\begin{itemize}
    \item Point $\gamma_{1}$: $\sigma(\cone(\alpha_{2}, \gamma_{1})) - \sigma(\cone(-\alpha_{1}, \gamma_{1}))$,
    \item Point $\gamma_{2}$: $\sigma(\cone(\alpha_{2}, \gamma_{2})) + \sigma(\cone(\alpha_{1}, -\gamma_{2}))$,
    \item Point $\gamma_{3}$: $\sigma(\cone(\alpha_{2}, -\gamma_{3})) - \sigma(\cone(-\alpha_{1}, -\gamma_{3}))$,
    \item Point $\gamma_{4}$: $\sigma(\cone(\alpha_{1}, \gamma_{4})) + \sigma(\cone(\alpha_{2}, -\gamma_{4}))$.
\end{itemize}
The corresponding decomposition diagrams are shown below.

\begin{figure}[H]
    \centering
    \subfloat[Decomposition by  $\gamma_{1}$ ]{\includegraphics[width=4cm,height=3cm]{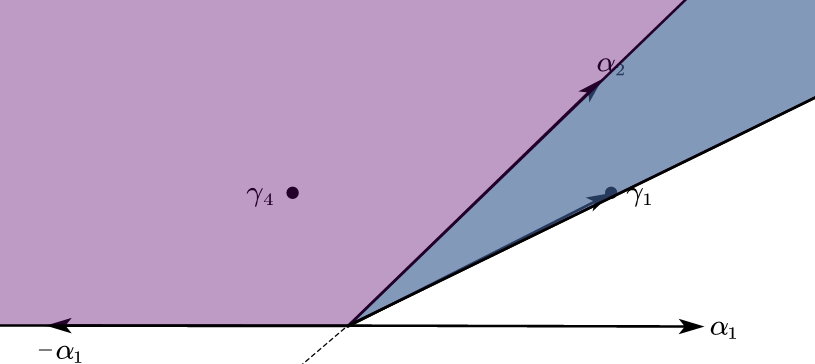}}\xspace
    \subfloat[Decomposition by  $\gamma_{2}$ ]{\includegraphics[width=4cm,height=3cm]{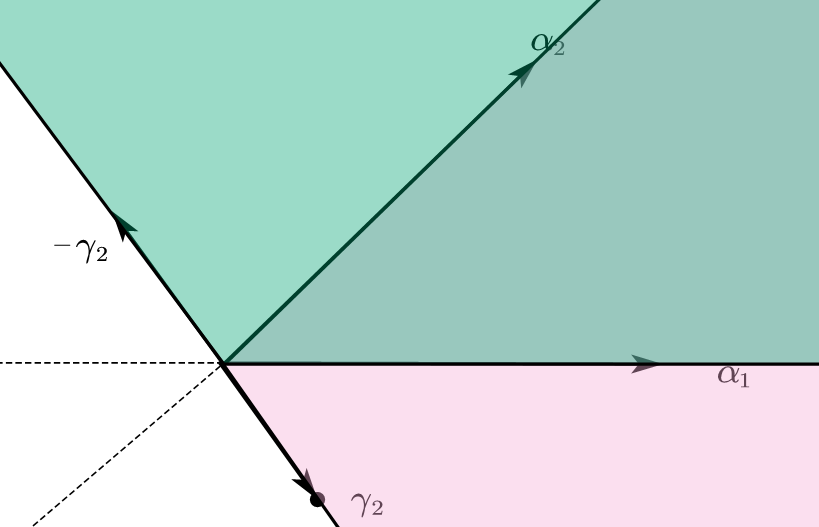}}\xspace

    \subfloat[Decomposition by  $\gamma_{3}$ ]{\includegraphics[width=4cm,height=3cm]{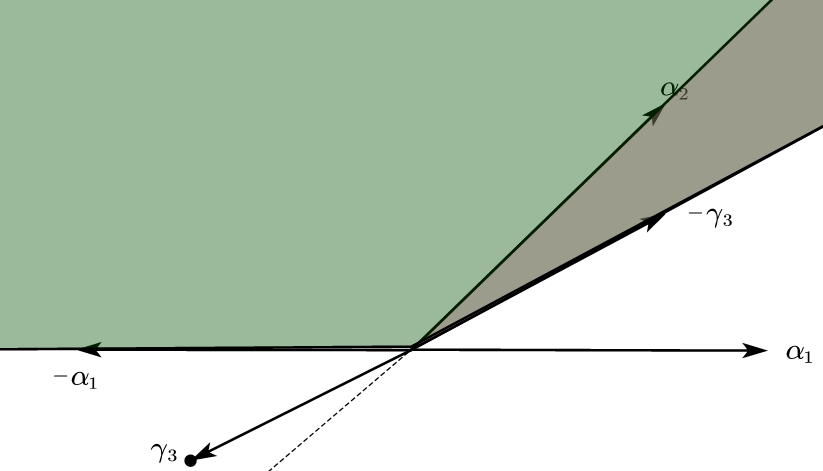}}\xspace
    \subfloat[Decomposition by  $\gamma_{4}$ ]{\includegraphics[width=4cm,height=3cm]{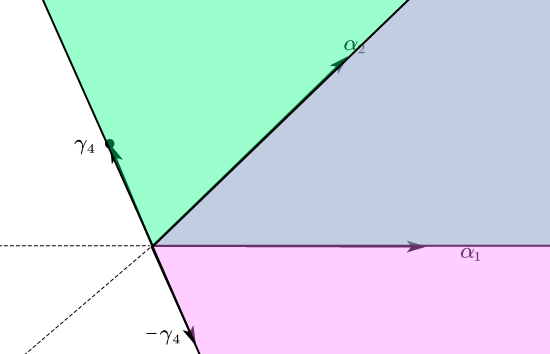}}\xspace
    \caption{Decomposition diagrams}
    \label{fig:decomposition}
\end{figure}

We conclude this subsection with a proof of Theorem~\ref{core_thm}.
\begin{proof}[Geometric Proof of Theorem \ref{core_thm}]
The expression in Theorem~\ref{core_thm} is equivalent to that in Proposition~\ref{core_pro} by the equalities
\begin{align*}
\sigma(\cone^{[i-1]}(A[(i\to\gamma)])) &= \sigma(\cone(A_i))(-1)^{i-1}, \\
\sigma(\cone^{\{k+1,\ldots,j-1\}}(A[(j\to-\gamma)])) &= \sigma(\cone(A_j))(-1)^{j-(k+1)},
\end{align*}
which are derived from Lemma~\ref{half_and_close_change}.
\end{proof}

\subsection{Algebraic Proof}
We present an alternative algebraic proof of Theorem~\ref{core_thm}. Central to this approach is the linear operator $\mathcal{Z}_{y_{i}}$ defined in~\cite{simplecone}:
\begin{equation}\label{Zyi-def}
  \mathcal{Z}_{y_{j}} \sum_{\kappa \in \mathbb{Q}^{n}} a_{\kappa}\mathbf{y}^{\kappa} := \sum_{\substack{\kappa \in \mathbb{Q}^{n} \\ \kappa_{j} \in \mathbb{Z}}} a_{\kappa} \mathbf{y}^{\kappa},
\end{equation}
where the coefficients $a_{\kappa}$ are independent of the $y$ variables. This operator extends component-wise to a multivariate version $\Zy$.

A key property from~\cite[Proposition 9]{simplecone} ensures $\Zy$ preserves rationality: it maps rational functions to rational functions, and the result is independent of the series expansion method.

Systematically developing this approach requires a lattice structure result concerning the column space $\operatorname{Col}(A)$.
\begin{lemma}\label{col(A)}
For any nonsingular rational $d\times d$ matrix $A = (\alpha_{1},\ldots,\alpha_{d})$, there exists $M \in \mathbb{Z}_{>0}$ such that
\begin{equation}
  \operatorname{Col}(A) \cap \mathbb{Z}^d = \left\{ \sum_{i=1}^{d} k_i\frac{\alpha_i}{M} : k_i \in \mathbb{Z} \right\} \cap \mathbb{Z}^d.
\end{equation}
Equivalently, every lattice point in $\operatorname{Col}(A)$ has an $A$-basis representation with coordinates whose denominators divide $M$.
\end{lemma}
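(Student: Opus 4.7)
The plan is to construct the integer $M$ explicitly from $A^{-1}$ and then verify both inclusions directly. Since $A$ is a nonsingular rational $d\times d$ matrix, $\operatorname{Col}(A) = \mathbb{R}^d$ and $A^{-1}$ exists with entries in $\mathbb{Q}$. I would choose $M$ to be (a positive multiple of) the least common denominator of all entries of $A^{-1}$, so that $M A^{-1} \in \mathbb{Z}^{d \times d}$. This $M$ will serve as the universal denominator in the $A$-basis representation of every lattice point.

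Next, I would verify the nontrivial inclusion $\mathbb{Z}^d \subseteq \{\sum_{i=1}^d k_i \alpha_i/M : k_i \in \mathbb{Z}\}$. Given any $v \in \mathbb{Z}^d$, write $v = A\alpha$ with $\alpha = A^{-1} v \in \mathbb{Q}^d$. Setting $\mathbf{k} = M\alpha = (MA^{-1})v$ expresses $\mathbf{k}$ as the product of an integer matrix and an integer vector, hence $\mathbf{k} \in \mathbb{Z}^d$. Reading off the components $k_1,\ldots,k_d$, we recover
\[
v = A\alpha = \sum_{i=1}^d (M\alpha)_i \frac{\alpha_i}{M} = \sum_{i=1}^d k_i \frac{\alpha_i}{M},
\]
which is exactly the required form. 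The reverse inclusion is automatic because the right-hand side of the claimed equality is, by construction, intersected with $\mathbb{Z}^d$.

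There is essentially no obstacle here: the content is just to observe that rationality of $A$ guarantees a common denominator $M$ for $A^{-1}$, after which the rest is linear algebra. The only subtlety worth flagging is the equivalent reformulation at the end of the statement ("every lattice point has an $A$-basis representation with coordinates whose denominators divide $M$"), which follows by dividing the relation $\mathbf{k} = M\alpha$ componentwise and noting that the coordinates $\alpha_i = k_i/M$ have denominators dividing $M$. I would close by remarking that the minimal such $M$ is the denominator of $A^{-1}$, i.e., the smallest positive integer such that $MA^{-1}$ is integral, which makes the construction canonical.
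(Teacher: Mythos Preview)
Your proof is correct and follows essentially the same idea as the paper: both arguments exploit the rationality of $A^{-1}$ to produce a universal denominator $M$ for the $A$-coordinates of integer vectors. The only cosmetic difference is that the paper first scales $A$ to an integer matrix and invokes Cramer's rule to exhibit $M$ as a multiple of $|\det(mA)|$, whereas you go straight to clearing the denominators of $A^{-1}$; your route is slightly more direct but the content is the same.
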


\begin{proof}
Let $m \in \mathbb{Z}_{>0}$ satisfy $mA \in \mathbb{Z}^{d \times d}$, which exists by the rationality of $A$. Consider any lattice point $\beta \in \operatorname{Col}(A) \cap \mathbb{Z}^d$ expressed in the $A$-basis as
\[
\beta = \sum_{i=1}^{d} q_i \frac{\alpha_i}{M}
\]
for some $M \in \mathbb{Z}_{>0}$ to be determined. Applying Cramer's Rule to the scaled system $mA\vec{x} = m\beta$ gives the coordinates:
\[
q_i = \frac{M}{\det(mA)} \det\left( m A[(i \to \beta)] \right),
\]
where recall that $A[(i \to \beta)]$ denotes $A$ with its $i$-th column replaced by $\beta$. Choosing $M$ as any positive integer multiple of $|\det(mA)|$ ensures $q_i \in \mathbb{Z}$ for all $i$. Consequently, $m q_i \in \mathbb{Z}$.
\end{proof}

From the perspective of $\Zy$, we obtain an alternative characterization of generating functions for half-open cones and their relation to closed cones, captured in the following proposition.

\begin{proposition}\label{thm_no_PI}
Let $K = \cone(A) = \cone(\alpha_1,\ldots,\alpha_n)$ be a rational simplicial cone in $\mathbb{R}^d$ with generators $\alpha_i \in \mathbb{Q}^d$. There exists $M \in \mathbb{Z}_{>0}$ such that for any half-open cone $K^\theta = \cone^\theta(\alpha_1,\ldots,\alpha_n)$, the generating function satisfies
\begin{align}
\sigma(K^{\theta};\y) = \Zy \left( \prod_{i \in \theta} \frac{\mathbf{y}^{\alpha_i/M}}{1 - \mathbf{y}^{\alpha_i/M}} \prod_{j \notin \theta} \frac{1}{1 - \mathbf{y}^{\alpha_j/M}} \right).
\end{align}
Consequently, we have the reciprocity relation
\begin{align}
\sigma(\cone^{\theta}(A);\y) = (-1)^{|\theta|} \sigma(\cone(A[(\theta \to -1)])).
\end{align}
Thus, any half-open cone can be represented as a closed cone via coordinate reflection.
\end{proposition}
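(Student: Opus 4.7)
The plan is to expand the product on the right-hand side as a formal multivariate series in fractional powers of $\y$, then use Lemma \ref{col(A)} to interpret the action of $\Zy$ as the lattice-point extraction condition defining $K^\theta$. Concretely, I would invoke Lemma \ref{col(A)} (extended to the rectangular case if $n<d$ by completing $\alpha_1,\ldots,\alpha_n$ to a rational basis of $\R^d$ and restricting) to obtain a common denominator $M \in \mathbb{Z}_{>0}$ such that every lattice point of $\operatorname{Col}(A) \cap \mathbb{Z}^d$ has a unique representation $\sum_{i=1}^n k_i \alpha_i/M$ with $k_i \in \mathbb{Z}$. The lattice points of $K^\theta$ are then in bijection with integer tuples $(k_1,\ldots,k_n)$ satisfying $k_j \geq 0$ for $j \notin \theta$, $k_i \geq 1$ for $i \in \theta$, and $\sum_i k_i \alpha_i/M \in \mathbb{Z}^d$.

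Next, I would expand each factor of the claimed product as a univariate geometric series,
\[
\frac{1}{1-\mathbf{y}^{\alpha_j/M}} = \sum_{k_j \geq 0} \mathbf{y}^{k_j \alpha_j/M}, \qquad \frac{\mathbf{y}^{\alpha_i/M}}{1-\mathbf{y}^{\alpha_i/M}} = \sum_{k_i \geq 1} \mathbf{y}^{k_i \alpha_i/M},
\]
so the full product becomes $\sum \mathbf{y}^{\sum_i k_i \alpha_i/M}$ over the unconstrained index set described above. Applying $\Zy$ retains precisely those monomials whose exponent vector lies in $\mathbb{Z}^d$, which by the choice of $M$ isolates exactly the lattice points of $K^\theta$, giving the first equality. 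The rationality-preservation property of $\Zy$ from \cite[Proposition 9]{simplecone} justifies this manipulation as an identity of rational functions, independent of the chosen series expansion.

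For the reciprocity, I would apply the elementary identity $\mathbf{y}^{\alpha/M}/(1-\mathbf{y}^{\alpha/M}) = -(1-\mathbf{y}^{-\alpha/M})^{-1}$ to each factor indexed by $i \in \theta$. This converts the product into $(-1)^{|\theta|}\prod_{i\in\theta}(1-\mathbf{y}^{-\alpha_i/M})^{-1}\prod_{j\notin\theta}(1-\mathbf{y}^{\alpha_j/M})^{-1}$, whose $\Zy$-image is $\sigma(\cone(A[(\theta\to-1)]);\y)$ by applying the first part of the proposition to the closed cone (empty $\theta$) spanned by $A[(\theta\to-1)]$. Column negation preserves $\operatorname{Col}(A)$, so the same $M$ serves both applications.

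I expect the main obstacle to be handling $\Zy$ rigorously: one must check that it commutes with the geometric-series expansion and that the extraction is independent of variable ordering. This is precisely the content of \cite[Proposition 9]{simplecone}, so once that tool is invoked the rest reduces to careful bookkeeping with $M$, the two index regimes $k_j \geq 0$ versus $k_i \geq 1$, and the sign identity used for the reciprocity.
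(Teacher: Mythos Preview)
Your proposal is correct and follows essentially the same route as the paper: obtain $M$ from Lemma~\ref{col(A)} so that every lattice point of $K^\theta$ is uniquely written as $\sum k_i\alpha_i/M$ with integer $k_i$ obeying the half-open sign constraints, expand the product as geometric series, let $\Zy$ enforce the integrality condition, and then derive the reciprocity via $\mathbf{y}^{\alpha/M}/(1-\mathbf{y}^{\alpha/M})=-1/(1-\mathbf{y}^{-\alpha/M})$. The only cosmetic difference is how the rectangular case $n<d$ is reduced to Lemma~\ref{col(A)}: you complete $\alpha_1,\dots,\alpha_n$ to a rational basis of $\R^d$, whereas the paper instead restricts to the first $n$ coordinates (where $A$ has a nonsingular $n\times n$ block) and applies the lemma there; both work.
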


\begin{proof}[Proof]
To establish the first equation, we aim to find a positive integer $M \in \mathbb{Z}_{>0}$ such that
\begin{align*}
K^\theta \cap \mathbb{Z}^{d} = \left\{ q_1 \frac{\alpha_1}{M} + \cdots + q_n \frac{\alpha_n}{M} \in \mathbb{Z}^d \,\bigg|\, q_i \in \mathbb{N} \text{ for } i \notin \theta, \, q_j \in \mathbb{Z}_{>0} \text{ for } j \in \theta \right\}.
\end{align*}
The right-hand side (RHS) is contained within the left-hand side (LHS) for any $M>0$. For the reverse inclusion, consider any $\beta = q_1 \frac{\alpha_1}{M} + \cdots + q_n \frac{\alpha_n}{M} \in K^\theta \cap \mathbb{Z}^d$, where $M$ is to be determined. By definition, $q_i \geq 0$ for $i \notin \theta$ and $q_j > 0$ for $j \in \theta$.

Since $K$ is simplicial, we can assume without loss of generality that the first $n$ rows and columns of $A$ form a nonsingular square matrix $A' = (\alpha_1', \ldots, \alpha_n')$. Let $\beta'$ denote the vector formed by the first $n$ coordinates of $\beta$. The system $A \vec{x} = \beta$ is equivalent to $A' \vec{x} = \beta'$, and the existence of such an $M$ is guaranteed by Lemma \ref{col(A)}.

For the second equation, apply the same $M$ to the cone $\cone(A[(\theta \to -1)])$. The reciprocity follows since for each $i \in \theta$:
\[
\frac{\mathbf{y}^{\alpha_i/M}}{1 - \mathbf{y}^{\alpha_i/M}} = \frac{-1}{1 - \mathbf{y}^{-\alpha_i/M}}.
\]
\end{proof}

By utilizing the properties of the $\Zy$ operator,  we can also derive Equation \eqref{core-closed-equation}. To this end, the following lemma is essential.
\begin{lemma}\label{dec_rational_function}
  For $\gamma = \alpha_1 + \cdots + \alpha_k - \alpha_{k+1} - \cdots - \alpha_r \in \mathbb{R}^d$ with $r \leq n$, we have
  \begin{equation}
    \dfrac{1}{\prod_{i=1}^{r} (1 - \mathbf{y}^{\alpha_i})} = L_1(\mathbf{y}) + L_2(\mathbf{y}),
  \end{equation}
  where
  \begin{align}
    L_1(\mathbf{y}) &= \sum_{j=1}^{k} \frac{(-1)^{j-1}}{(1 - \mathbf{y}^\gamma) \prod_{i=1}^{j-1} (1 - \mathbf{y}^{-\alpha_i}) \prod_{i=j+1}^{r} (1 - \mathbf{y}^{\alpha_i})}, \\
    L_2(\mathbf{y}) &= \sum_{j=k+1}^{r} \frac{(-1)^{j-(k+1)}}{(1 - \mathbf{y}^{-\gamma}) \prod_{i=1}^{k} (1 - \mathbf{y}^{\alpha_i}) \prod_{i=k+1}^{j-1} (1 - \mathbf{y}^{-\alpha_i}) \prod_{i=j+1}^{r} (1 - \mathbf{y}^{\alpha_i})}.
  \end{align}
\end{lemma}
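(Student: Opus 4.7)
The plan is to reduce the claimed identity to a simple two-term rational-function calculation by first converting every factor $1 - \mathbf{y}^{-\alpha_i}$ into a factor $1 - \mathbf{y}^{\alpha_i}$ (absorbing a monomial and a sign) and then exploiting telescoping. No induction, no $\Zy$ machinery, and no appeal to the geometric results of Section~3 are needed; this is a pure identity of rational functions in $\mathbf{y}^{\alpha_1},\ldots,\mathbf{y}^{\alpha_r}$.

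First I would apply $1 - \mathbf{y}^{-\alpha_i} = -\mathbf{y}^{-\alpha_i}(1 - \mathbf{y}^{\alpha_i})$ to every ``reflected'' factor in each summand. In the $j$-th term of $L_1$ there are $j-1$ such factors, contributing a sign $(-1)^{j-1}$ that cancels the prefactor $(-1)^{j-1}$, while the accompanying monomials multiply to $\mathbf{y}^{\alpha_1+\cdots+\alpha_{j-1}}$. This collapses $L_1$ to
$$L_1 = \frac{1}{1 - \mathbf{y}^\gamma} \sum_{j=1}^{k} \frac{\mathbf{y}^{\alpha_1+\cdots+\alpha_{j-1}}}{\prod_{i\in [r],\, i\neq j}(1 - \mathbf{y}^{\alpha_i})},$$
and the identical manipulation on $L_2$ (now with $j-k-1$ reflected factors cancelling the $(-1)^{j-k-1}$ prefactor) gives
$$L_2 = \frac{1}{1 - \mathbf{y}^{-\gamma}} \sum_{j=k+1}^{r} \frac{\mathbf{y}^{\alpha_{k+1}+\cdots+\alpha_{j-1}}}{\prod_{i\in [r],\, i\neq j}(1 - \mathbf{y}^{\alpha_i})}.$$

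Second, I multiply the proposed identity through by $\prod_{i=1}^{r}(1 - \mathbf{y}^{\alpha_i})$; in each summand of $L_1$ (resp.\ $L_2$) this cancels all factors except $1 - \mathbf{y}^{\alpha_j}$. The goal becomes
$$1 = \frac{1}{1-\mathbf{y}^\gamma}\sum_{j=1}^{k}\mathbf{y}^{\alpha_1+\cdots+\alpha_{j-1}}(1-\mathbf{y}^{\alpha_j}) + \frac{1}{1-\mathbf{y}^{-\gamma}}\sum_{j=k+1}^{r}\mathbf{y}^{\alpha_{k+1}+\cdots+\alpha_{j-1}}(1-\mathbf{y}^{\alpha_j}).$$
Each inner sum now telescopes: the first collapses to $1 - \mathbf{y}^{\alpha_1+\cdots+\alpha_k}$ and the second to $1 - \mathbf{y}^{\alpha_{k+1}+\cdots+\alpha_r}$. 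Writing $a := \mathbf{y}^{\alpha_1+\cdots+\alpha_k}$ and $b := \mathbf{y}^{\alpha_{k+1}+\cdots+\alpha_r}$, so that $\mathbf{y}^\gamma = a/b$, the task reduces to the one-line elementary identity
$$\frac{1-a}{1-a/b} + \frac{1-b}{1-b/a} = 1,$$
which follows by clearing denominators: $b(1-a) - a(1-b) = b-a$. The degenerate cases $k=0$ and $k=r$ (empty $L_1$ or $L_2$, with $a=1$ or $b=1$) require no separate treatment.

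The only genuine point of care is the sign and monomial bookkeeping in the first step; once the compact forms of $L_1$ and $L_2$ are secured, the telescoping and the final two-variable identity are mechanical. I therefore expect the write-up to be short, with the principal challenge being merely to present the sign calculation transparently enough that the cancellations are obvious to the reader.
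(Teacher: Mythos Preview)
Your proof is correct and uses essentially the same ingredients as the paper's: the conversion $1/(1-\mathbf{y}^{-\alpha_i})=-\mathbf{y}^{\alpha_i}/(1-\mathbf{y}^{\alpha_i})$, the telescoping identity $\sum_{j}\mathbf{y}^{\alpha_1+\cdots+\alpha_{j-1}}(1-\mathbf{y}^{\alpha_j})=1-\mathbf{y}^{\alpha_1+\cdots+\alpha_k}$, and the relation $\mathbf{y}^\gamma=a/b$. The only difference is direction: the paper starts from the left-hand side, inserts $(1-\mathbf{y}^\gamma)/(1-\mathbf{y}^\gamma)$, splits the numerator as $(1-a)-\mathbf{y}^\gamma(1-b)$, and then telescopes and converts forward to reach $L_1+L_2$, whereas you run the same computation in reverse and finish with the equivalent two-variable check $\frac{1-a}{1-a/b}+\frac{1-b}{1-b/a}=1$.
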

\begin{proof}[Proof]
It is clear that
\begin{align*}
  \dfrac{1}{\prod\limits_{i=1}^{r}(1-\mathbf{y}^{\alpha_{i}})}= & \dfrac{1-\mathbf{y}^{\alpha_{1}+\cdots+\alpha_{k}}-\mathbf{y}^{\gamma}(1-\mathbf{y}^{\alpha_{k+1}+\cdots+\alpha_{r}})}{(1-\mathbf{y}^\gamma)\prod\limits_{i=1}^{r}(1-\mathbf{y}^{\alpha_{i}})} \\
  = & \dfrac{1-\mathbf{y}^{\alpha_{1}+\cdots+\alpha_{k}}}{(1-\mathbf{y}^\gamma)\prod\limits_{i=1}^{r}(1-\mathbf{y}^{\alpha_{i}})}+
\dfrac{(1-\mathbf{y}^{\alpha_{k+1}+\cdots+\alpha_{r}})}{(1-\mathbf{y}^{-\gamma})\prod\limits_{i=1}^{r}(1-\mathbf{y}^{\alpha_{i}})}.
\end{align*}
Elementary algebra gives
\begin{align*}
 \dfrac{1-\mathbf{y}^{\alpha_{1}+\alpha_{2}+\cdots+\alpha_{k}}}{(1-\mathbf{y}^\gamma)\prod\limits_{i=1}^{r}(1-\mathbf{y}^{\alpha_{i}})}  = & \dfrac{\sum_{j=1}^{k}\mathbf{y}^{\alpha_1\cdots\alpha_{j-1}}(1-\mathbf{y}^{\alpha_j})}{(1-\mathbf{y}^\gamma)\prod\limits_{i=1}^{r}(1-\mathbf{y}^{\alpha_{i}})} \\
  = & \sum_{j=1}^{k} \frac{(-1)^{j-1}}{(1-\mathbf{y}^\gamma)\prod\limits_{i=1}^{j-1}(1-\mathbf{y}^{-\alpha_{i}})\prod\limits_{i=j+1}^{r}(1-\mathbf{y}^{\alpha_{i}})}.
\end{align*}
Similarly, we have
$$\dfrac{(1-\mathbf{y}^{\alpha_{k+1}+\cdots+\alpha_{r}})}{(1-\mathbf{y}^{-\gamma})\prod\limits_{i=1}^{r}(1-\mathbf{y}^{\alpha_{i}})}= \sum\limits_{j=k+1}^{r}\frac{(-1)^{j-(k+1)}}{(1-\mathbf{y}^{-\gamma})\prod\limits_{i=1}^{k}(1-\mathbf{y}^{\alpha_i})
\prod\limits_{i=k+1}^{j-1}(1-\mathbf{y}^{-\alpha_{i}})\prod\limits_{i=j+1}^{r}(1-\mathbf{y}^{\alpha_{i}})}.$$
\end{proof}

We now provide a new proof of Theorem~\ref{core_thm} via the $\Zy$ operator.
\begin{proof}[Algebraic Proof of Theorem \ref{core_thm}]
  It suffices to verify Equation~\eqref{core-closed-equation} for $\gamma = \alpha_1 + \cdots + \alpha_k - \alpha_{k+1} - \cdots - \alpha_r$.

  By Theorem~\ref{thm_no_PI}, there exists $M \in \mathbb{Z}_{>0}$ such that
  \begin{equation*}
    \sigma(K) = \Zy \frac{1}{\left(1 - \mathbf{y}^{\frac{\alpha_1}{M}}\right) \left(1 - \mathbf{y}^{\frac{\alpha_2}{M}}\right) \cdots \left(1 - \mathbf{y}^{\frac{\alpha_n}{M}}\right)}.
  \end{equation*}
  An analogous formula holds for each cone on the right-hand side of \eqref{core-closed-equation}, though with a distinct $M_i$ for the $i$-th cone. Replacing $M$ by a common multiple of $M, M_1, \dots, M_r$ ensures all formulas use identical $M$.

 By the linearity of $\Zy$ and Lemma~\ref{dec_rational_function},
  \begin{equation*}
    \sigma(K) = \Zy \frac{L_1(\mathbf{y}^{\frac{1}{M}})}{\prod_{i=r+1}^{n} \left(1 - \mathbf{y}^{\frac{\alpha_i}{M}}\right)} + \Zy \frac{L_2(\mathbf{y}^{\frac{1}{M}})}{\prod_{i=r+1}^{n} \left(1 - \mathbf{y}^{\frac{\alpha_i}{M}}\right)}.
  \end{equation*}
  Furthermore,

  \begin{align}
     & \Zy \frac{ L_1(\mathbf{y}^{\frac{1}{M}})}{\prod\limits_{i=r+1}^{n}\left(1-\mathbf{y}^{\frac{\alpha_i}{M}}\right)}=\sum\limits_{j=1}^{k}\Zy\frac{(-1)^{j-1}}{(1-\mathbf{y}^\frac{\gamma}{M})\prod\limits_{i=1}^{j-1}(1-\mathbf{y}^{\frac{-\alpha_{i}}{M} })
  \prod\limits_{i=j+1}^{n}(1-\mathbf{y}^{\frac{\alpha_{i}}{M} })}, \\
     & \Zy \frac{ L_2(\mathbf{y}^{\frac{1}{M}})}{\prod\limits_{i=r+1}^{n}\left(1-\mathbf{y}^{\frac{\alpha_i}{M}}\right)}=\sum\limits_{j=k+1}^{r}\Zy\frac{(-1)^{j-(k+1)}}{(1-\mathbf{y}^{\frac{-\gamma}{M} })\prod\limits_{i=1}^{k}(1-\mathbf{y}^{\frac{\alpha_i}{M} })
     \prod\limits_{i=k+1}^{j-1}(1-\mathbf{y}^{\frac{-\alpha_{i}}{M} })\prod\limits_{i=j+1}^{n}(1-\mathbf{y}^{\frac{\alpha_{i}}{M} })}.
  \end{align}

Let $\eta_1=(-1,\ldots,-1,\gamma),\eta_2=(-1,\ldots,-1,-\gamma)$. Since
\begin{align*}
  &\sigma(\cone(A[(1,\ldots,j)\to\eta_1]))=\Zy\frac{(-1)^{j-1}}{(1-\mathbf{y}^\frac{\gamma}{M})\prod\limits_{i=1}^{j-1}(1-\mathbf{y}^{\frac{-\alpha_{i}}{M} })
  \prod\limits_{i=j+1}^{n}(1-\mathbf{y}^{\frac{\alpha_{i}}{M} })},\\
  &\sigma(\cone(A[(k+1,\ldots,j)\to\eta_2)]))=\Zy\frac{(-1)^{j-(k+1)}}{(1-\mathbf{y}^{\frac{-\gamma}{M} })\prod\limits_{i=1}^{k}(1-\mathbf{y}^{\frac{\alpha_i}{M} })
     \prod\limits_{i=k+1}^{j-1}(1-\mathbf{y}^{\frac{-\alpha_{i}}{M} })\prod\limits_{i=j+1}^{n}(1-\mathbf{y}^{\frac{\alpha_{i}}{M} })},
\end{align*}
 the theorem follows.
\end{proof}

\section{PDBarv: A Primal-Dual Barvinok Algorithm }
The \emph{primitive reduction} of a nonsingular matrix \( A \in \mathbb{Q}^{d \times d} \) is defined as the unique integer matrix whose columns are primitive vectors that generate the same cone as those of \( A \). The matrix $A$ is called \emph{primitive} if it equals its primitive reduction. In this case, the cone \( K = \cone(A) \) has index \( \ind(K) = \left|\det(A)\right| \).

According to duality theory~\cite{Polarity-of-cone}, the matrix \( (A^{-1})^T \) consists of the generating vectors for the dual cone of \( K \). Denote its  primitive reduction by \( A^* \) and let \( K^* = \cone(A^*) \). It is convenient to use the group $\{\id, *\}$, where the identity element $\id$ means primal space and $*$ means dual space, with $*\cdot *=\id$.
\subsection{Algorithm Design}

Theorem~\ref{core_thm} provides a decomposition formula for the cone $K$ without lower dimensional cones. This formula is inherently restricted to the primal space. In the dual space, we utilize Lemma~\ref{l-cone-dec} for decomposition. To enhance computational efficiency, we adopt a strategic approach: if the index of $K$ satisfies $\ind(K) \leq \ind(K^*)$, we decompose $K$; otherwise, we decompose its dual cone $K^*$. This strategy applies to any intermediate simplicial cone $K$. By integrating the decomposition formulas with this adaptive strategy, we propose the PDBarv algorithm (short for primal-dual Barvinok) for computing the generating function of a simplicial cone. This approach improves the efficiency of cone decomposition, particularly in special cases.

We present a pseudo-code as follows,  where $\Gamma(A)$ (currently treated as $A$) will be discussed later.

\algtext*{EndIf}
\algtext*{EndWhile}
\algtext*{EndFor}
\begin{algorithm}[H]
\caption{PDBarv Cone Decomposition}
\begin{algorithmic}[1]
\Require  A  nonsingular primitive matrix $B\in\mathbb{Z}^{d\times d}$ generating $K= \cone(B)$.
\Ensure   A list of $(\epsilon_i,\sigma(\cone(B_i)))$ as described in Equation \eqref{sign_sum_smallcones}.
    \State Set two lists Uni $= []$ and NonUni $= [(1,\id, \Gamma(B))]$
    \While{NonUni $\neq []$}
        \State Pop and get the last element $(\epsilon_A,*_A, \Gamma(A))$ from NonUni, where  $\epsilon_A=\pm1$ and $*_A\in\{\id,*\}$.
        \If{$\ind(\cone(A))=1$}
            \State Add $(\epsilon_A,\sigma(\cone(A^{*_A})))$ to Uni.
        \Else
            \If{$\ind(\cone(A))> \ind(\cone(A^{*}))$}
            \State Update $(\epsilon_A,*_A, \Gamma(A))$  by $(\epsilon_A,*\cdot*_A, \Gamma(A^*))$.
            \EndIf
            \State Compute the smallest vector $\beta = (k_1, \ldots, k_d)^T$ in $L(A^{-1})$ and $\gamma=A\beta$.
            \If{$*_A=\text{id}$} \hspace{1cm}\# we are in the primal space
            \State Permute the columns of $A$ to apply Theorem \ref{core_thm}, and add every $(s_i\cdot\e_A,*_{A}, \Gamma(A_i))$ to NonUni.
            \Else \hspace{1cm}\# $*_A=*$, so we are in the dual space
                \If{$k_i \leq 0$ for all $i$}
                 \State Update $\beta=-\beta$ and $\gamma=A\beta$.
                 \EndIf
                 \State   Apply Lemma~\ref{l-cone-dec} to add $(\text{sgn}(k_i)\cdot\e_A,*_{A}, \Gamma(A_i))$ to NonUni.
            \EndIf
        \EndIf

    \EndWhile
 \Return Uni.
\end{algorithmic}\label{alg-PDB}
\end{algorithm}

\begin{theorem}
Algorithm~\ref{alg-PDB} correctly computes the unimodular decomposition as in Equation \eqref{sign_sum_smallcones}.
\end{theorem}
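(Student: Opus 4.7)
The plan is to prove correctness by induction on iterations of the while loop. The argument has three parts: a loop invariant relating $\sigma(K)$ to the two working lists, a case analysis verifying this invariant across the loop's four branches, and a termination argument based on the strict decrease of indices at each decomposition step.

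First I would formulate the invariant: at every entry into the while loop,
\[
\sigma(K) \;=\; \sum_{(\epsilon,\tau)\in \mathrm{Uni}} \epsilon\,\tau \;+\; \sum_{(\epsilon_A,\,*_A,\,\Gamma(A))\in \mathrm{NonUni}} \epsilon_A\,\sigma\bigl(\cone(A^{*_A})\bigr),
\]
where $A^{\id}=A$ and $A^{*}$ is the primitive reduction of $(A^{-1})^T$. Initialization is clear: $\mathrm{NonUni}=[(1,\id,\Gamma(B))]$ and $\mathrm{Uni}=[]$ give $\sigma(\cone(B))=\sigma(K)$.

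Next I would verify preservation in each branch. In the unimodular branch, a term merely moves from the NonUni sum to the Uni sum, so both sides are unchanged. In the swap branch, the replacement $(\epsilon_A,*_A,\Gamma(A)) \mapsto (\epsilon_A,*\cdot *_A,\Gamma(A^*))$ keeps the summand invariant since $(A^*)^{*\cdot *_A}=A^{*_A}$ by double polarity for primitive generators. The primal branch applies Theorem~\ref{core_thm} to $\cone(A)$ with $\gamma=A\beta$, after permuting the columns of $A$ so that the positive coordinates of $\beta$ come first; the theorem yields $\sigma(\cone(A)) = \sum_i s_i \sigma(\cone(A_i))$ with exactly the signs $s_i$ the algorithm records, and multiplying by $\epsilon_A$ preserves the invariant. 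The dual branch is subtler: Lemma~\ref{l-cone-dec} gives only
\[
[\cone(A)] \;\equiv\; \sum_i \mathrm{sgn}(k_i)\,[\cone(A_i)] \pmod{\text{lower-dimensional cones}},
\]
so one must upgrade this to $\sigma(\cone(A)^*) = \sum_i \mathrm{sgn}(k_i)\,\sigma(\cone(A_i)^*)$. The plan here is to invoke the Barvinok--Pommersheim polarization trick: polars of lower-dimensional cones contain a line, hence have zero generating function by Lemma~\ref{lem-line}; combined with $\cone(A)^* = \cone(A^*)$, this yields exactly what is needed when $*_A=*$.

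For termination I would use the classical index-reduction estimate: the vector $\beta$ chosen in line~10 satisfies $|k_i| \le c_d\,\ind(\cone(A))^{-1/d}$ via Minkowski's theorem together with the LLL approximation, and the swap in lines~7--9 ensures this bound applies to the side with smaller index. Consequently every replacement matrix $A_i$ has $\ind(\cone(A_i)) < \ind(\cone(A))$, so under a suitable multiset well-ordering the indices in NonUni strictly decrease. Since indices are positive integers, the loop halts with $\mathrm{NonUni}=\emptyset$; the invariant then collapses to Equation~\eqref{sign_sum_smallcones}, and every $\cone(B_i)$ in Uni is unimodular because entries are only moved there when their index is $1$. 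The step I expect to be the main obstacle is the dual branch's polarization argument, since it requires upgrading the set-theoretic ``$\bmod$ lower-dimensional cones'' relation to an exact generating-function identity; the other branches reduce either to Theorem~\ref{core_thm} or to direct bookkeeping.
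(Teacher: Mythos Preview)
Your proof plan is correct and takes essentially the same approach as the paper: both rely on Theorem~\ref{core_thm} in the primal branch, on Lemma~\ref{l-cone-dec} combined with the dual (polarization) trick in the dual branch, and on the observation that the swap in lines~7--8 leaves the underlying primal cone unchanged. Your version is simply more explicit, spelling out the loop invariant and the Barvinok--Pommersheim polarity step that the paper invokes only by reference, and adding the standard index-reduction termination argument that the paper leaves implicit.
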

\begin{proof}[Proof]
If lines 7 and 8 are removed from the PDBarv algorithm, we obtain the traditional Barvinok algorithm (except that we use Theorem~\ref{core_thm}
here), or dual Barvinok algorithm if we start with $B^*$.

In line 3, we pick out the last element $(\epsilon_A,*_A, \Gamma(A))$ in NonUni. We are dealing with a cone
in the primal space when $*_A=\id$ or in the dual space when $*_A=*$.

If $\ind(A)=\ind(A^*)=1$, we obtain an unimodular cone, and the corresponding signed cone is added to Uni.
These steps are performed in lines 4 and 5.

If $\ind(A)>1$, we check if $\ind(A)>\ind(A^*)$ in line 7. If true, then
we switch from primal space to dual space or vice versa in line 8. This corresponds to replacing $(*_A,\Gamma(A))$ with $(*_A\cdot *,\Gamma(A^*))$.

In lines 9--15, Theorems~\ref{core_thm} and Lemma~\ref{l-cone-dec} are respectively applied to recursively reduce the index of the simplicial cone, thereby justifying Algorithm~\ref{alg-PDB}.
\end{proof}

\subsection{Complexity Result and $\Gamma(A)$}
The PDBarv algorithm has two variants: PBarv (primal Barvinok) and DBarv (dual Barvinok). The former omits Steps 7--8, while the latter initializes with $\text{NonUni} = [(1, *, \Gamma(B^*))]$ and similarly skips Steps 7--8. Our subsequent analysis focuses on PBarv due to its structural parallels with DBarv.

The polynomial complexity of PBarv implies polynomial behavior for PDBarv, as the latter may accelerate index reduction through cone shortcuts. The computational bottleneck occurs in the iteration loop  (Steps 7--15), particularly in the implementation of the LLL algorithm in Step 9. To ensure theoretical polynomial complexity,  one needs to employ \emph{the enumeration step} to identify the smallest vector under the $\|\cdot \|_\infty$ norm. However, this step is rarely used in practical implementations. For further details, refer to \cite{De-loear-LLLwork-finish-Bar}.

Next we analyze the speed of the $w$Barv algorithms, where $w\in \{\text{P, D, PD}\}$,  for a concrete $K=\cone(B)$.
Denote by $n^w(K)$ the number of cones produced by the $w$Barv algorithm. Denote by $t^w(K)$ the runtime of the $w$Barv algorithm.
Then $t^w(K)$ is almost linear in $n^w(K)$. The reason is two-folded: the LLL algorithm has an average-case complexity of $O(d^3\log d)$ \cite{cl2013LLL} for
lattices of dimension $d$; the number of calls of LLL is almost linear in $n^w(K)$.

From the above view, we conclude that PDBarv outperforms PBarv or DBarv because PDBarv has potential shortcuts to decrease the index, and hence to produce a smaller number of cones.

\subsection*{Introduction of $\Gamma(A)$}

In extremal cases where Step 7 consistently returns false, PDBarv differs from PBarv only through the additional $\ind(A^*)$ computation in Step 7. This introduces an $O(d^3)$ overhead, potentially making PDBarv a bit slower.
However, this discrepancy becomes negligible when employing $\Gamma(A)$, defined as

\[
\Gamma(A) = \left(A,\ \det(A),\ A^*,\ \det(A^*),\ G\right)
\]
where $G$ is the unique matrix satisfying $A^* \cdot G = \adj(A)^T$. We use the adjugate matrix $\adj(A)$ rather than $A^{-1}$ here to avoid the costly rational arithmetic.

From the relationships between $A^*$, $(A^{-1})^T$, and $\adj(A)^T$, it follows that:
\begin{itemize}
    \item When $\det(A) > 0$, $G$ is the diagonal matrix whose diagonal entries are the greatest common divisors of the corresponding columns of $\adj(A)^T$.
    \item When $\det(A) < 0$, $-G$ is the diagonal matrix whose diagonal entries are the greatest common divisors of the corresponding columns of $\adj(A)^T$.
\end{itemize}
Crucially, the relationship $\cone(A^*) = \cone((A^{-1})^T)$ enables efficient computation since $A^{-1}$ is already utilized in Step 9.

The connection between $A^{-1}$ and $A[(i\to\gamma)]^{-1}$ was established in \cite{ORTextbookCompilationGroup1990}, enabling efficient computation of $\adj(A([i\to \gamma]))$ from $\adj(A)$. We formalize this relationship through the following lemma.

\begin{lemma}\label{PDBarv_lemma}
Let  $ A = (\alpha_1, \alpha_2, \ldots, \alpha_d) $  be a nonsingular  $ d \times d $  matrix. For any column vector  $ \gamma = \sum_{j=1}^d k_j \alpha_j $  and a fixed index  $i$  such that  $k_i \neq 0 $ , the adjugate matrix of the column-modified matrix satisfies
\begin{equation}
\adj(A[i \to \gamma]) = k_i (e_1, e_2, \ldots, \xi, \ldots, e_d) \adj(A),
\end{equation}
where  $ \xi = \left(-\frac{k_1}{k_i}, -\frac{k_2}{k_i}, \ldots, \frac{1}{k_i}, \ldots, -\frac{k_d}{k_i}\right)^T $  and  $ \{e_j\}_{j=1}^d $  denote the standard basis vectors.
\end{lemma}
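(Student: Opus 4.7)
The plan is to reduce the statement to an elementary matrix identity and then apply the basic relation $\adj(M)=\det(M)\,M^{-1}$. First I would express the column replacement $A\mapsto A[i\to\gamma]$ as a right multiplication of $A$ by a simple matrix. Since $\gamma=\sum_{j=1}^{d}k_j\alpha_j=A\mathbf{k}$ with $\mathbf{k}=(k_1,\ldots,k_d)^T$, defining $E$ to be the matrix with standard basis vectors $e_j$ in every column $j\neq i$ and with $\mathbf{k}$ in column $i$ yields $A[i\to\gamma]=AE$.

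Next I would compute the determinant and inverse of $E$ explicitly. Expanding $\det(E)$ along column $i$ (whose only nonzero contributions come from $k_i$ on the diagonal, since the off-diagonal entries of the other columns are zero) immediately gives $\det(E)=k_i$. For the inverse, I would verify that the matrix $E'=(e_1,\ldots,\xi,\ldots,e_d)$ with $\xi$ placed in column $i$ satisfies $EE'=I$: columns $j\neq i$ are trivially reproduced, and for column $i$ one computes $E\xi=\sum_{j\neq i}(-k_j/k_i)e_j+(1/k_i)\mathbf{k}=e_i$, confirming $E^{-1}=E'$.

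With these two pieces in hand, I would invoke the identity $\adj(M)=\det(M)\,M^{-1}$ (valid since the hypothesis $k_i\neq 0$ guarantees $\det(AE)=k_i\det(A)\neq 0$) and write
\begin{equation*}
\adj(A[i\to\gamma])=\adj(AE)=\det(AE)(AE)^{-1}=k_i\,E^{-1}\,\det(A)A^{-1}=k_i\,E^{-1}\adj(A),
\end{equation*}
which, upon substituting the explicit form of $E^{-1}$, is exactly the claimed formula.

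I do not anticipate a real obstacle here; the proof is essentially bookkeeping. The only subtlety worth noting is the placement of factors (one must right-multiply $A$ by $E$, not left-multiply, so that $E^{-1}$ appears on the left of $\adj(A)$ in the final formula), and the need to keep $k_i\neq 0$ so that $E$ remains invertible and the adjugate identity via inverses applies directly; the resulting formula is polynomial in the $k_j$'s, so it actually extends by continuity to arbitrary $k_j$, but only the nondegenerate case is needed in the algorithm.
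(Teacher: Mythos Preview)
Your argument is correct. Writing the column replacement as $A[i\to\gamma]=AE$ with $E=(e_1,\ldots,\mathbf{k},\ldots,e_d)$, computing $\det(E)=k_i$ and $E^{-1}=(e_1,\ldots,\xi,\ldots,e_d)$, and then invoking $\adj(AE)=\det(AE)(AE)^{-1}=k_i\,E^{-1}\adj(A)$ is exactly the right way to do it, and your verifications of $\det(E)$ and of $E\xi=e_i$ are accurate.

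As for comparison with the paper: the paper does not actually supply a proof of this lemma. It only remarks that the relation between $A^{-1}$ and $A[i\to\gamma]^{-1}$ is classical (citing an operations-research textbook for the simplex-style column-update formula) and then states the adjugate version without argument. Your write-up therefore fills in precisely what the paper omits, and it does so by the natural route---factoring the column replacement as a right multiplication by an elementary matrix and using multiplicativity of determinant and inverse. Your closing remark that the identity is polynomial in the $k_j$ and hence extends beyond the invertible case is a nice bonus, though as you note only $k_i\neq 0$ is relevant here.
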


To facilitate complexity analysis, let $A = (a_{ij})_{d \times d}$ be a nonsingular matrix and define $\kappa(A) = \prod_{j=1}^d \min \{ |a_{ij}| \neq 0 : i = 1,\ldots,d \}$. The following corollary shows that in extreme cases, this deficiency is negligible.
\begin{corollary}\label{complexana}
Let  $ A $  be as above, and let  $ \gamma = \sum_{j=1}^d k_j \alpha_j $  with a fixed  $i $  such that  $ k_i \neq 0 $. Given  $ \Gamma(A) $ , the following computational complexities hold:
\begin{itemize}
    \item $\Gamma(A^*)$ can be computed using $O(d)$ ring operations
    \item $\Gamma(A[(i\to \gamma)])$ can be computed using $O(d\log m_1) + O(d^2)$ ring operations
\end{itemize}
where $m_1 = \kappa(\adj(A[(i\to\gamma)])^T)$.
\end{corollary}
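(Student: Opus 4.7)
\textbf{Plan for Corollary \ref{complexana}.} The strategy is to handle the two assertions in turn, in each case exploiting the auxiliary data already stored in $\Gamma(A) = (A, \det(A), A^*, \det(A^*), G)$ so as to avoid any fresh determinant computation or matrix inversion. The bottleneck turns out to be a sparse matrix product followed by column-wise GCD extraction.

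For the first claim, observe that $\Gamma(A^*) = (A^*, \det(A^*), (A^*)^*, \det((A^*)^*), G')$ has its first two entries literally copied from $\Gamma(A)$. Using the defining relation $A^*\cdot G = \adj(A)^T = \det(A)\,(A^{-1})^T$, one transposes and inverts to obtain $(A^{*-1})^T = \tfrac{1}{\det(A)}\,A\,G$. Because $G$ is diagonal, the primitive reduction of this expression returns $(A^*)^* = A$ and $\det((A^*)^*) = \det(A)$ (up to the sign dichotomy $\det(A) \gtrless 0$ flagged in the paper), both already recorded. The updated diagonal matrix $G'$ is then determined entrywise by an explicit rational combination of the corresponding entry of $G$ together with $\det(A)$ and $\det(A^*)$; populating its $d$ diagonal entries therefore costs $O(d)$ ring operations.

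For the second claim, Lemma \ref{PDBarv_lemma} gives $\adj(A[(i\to\gamma)]) = k_i\, P\, \adj(A)$, where $P$ is the identity matrix with its $i$-th column replaced by $\xi$. Such a sparse left-multiplication can be executed column by column: the $j$-th column of $P\cdot\adj(A)$ is obtained from the $j$-th column of $\adj(A)$ by adding a scalar multiple of the $i$-th row of $\adj(A)$, costing $O(d)$ per column and $O(d^2)$ in total, followed by a global scaling by $k_i$. The determinant is updated via the one-line identity $\det(A[(i\to\gamma)]) = k_i\,\det(A)$. What remains is to recover the primitive reduction $A[(i\to\gamma)]^*$ and the new diagonal matrix $G''$ from $\adj(A[(i\to\gamma)])^T$; this reduces to computing, for each of the $d$ columns, the GCD of its entries and dividing out. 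A Euclidean running-GCD over a single column costs $O(d) + O(\log m_{1,j})$ ring operations, where $m_{1,j}$ is the smallest nonzero entry in column $j$; summing over the $d$ columns and collecting the logarithmic terms into $m_1 = \kappa(\adj(A[(i\to\gamma)])^T) = \prod_j m_{1,j}$ via $\sum_j \log m_{1,j} = \log m_1$ gives the stated bound $O(d\log m_1) + O(d^2)$.

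\textbf{Main obstacle.} The chief delicate point is the bookkeeping in the primitive-reduction and dualisation steps: pinning down the precise entrywise $G\mapsto G'$ update so that a single ring operation per diagonal entry truly suffices, correctly accounting for the sign dichotomy $\det(A) \gtrless 0$ that enters the definition of $G$, and verifying that the Euclidean costs aggregate across columns to $O(d\log m_1)$ (via multiplicativity in the definition of $\kappa$) rather than naively to $O(d^2\log m_1)$. Once these details are in place, both complexity bounds follow immediately from Lemma \ref{PDBarv_lemma} and the structural identities among $A$, $A^*$, $\adj(A)$, and $G$.
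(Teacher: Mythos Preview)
Your proposal is correct and follows essentially the same route as the paper: for $\Gamma(A^*)$ you both observe $(A^*)^*=A$ and reduce everything to the diagonal update $G' = \tfrac{\det(A^*)}{\det(A)}\,G$, and for $\Gamma(A[(i\to\gamma)])$ you both invoke Lemma~\ref{PDBarv_lemma} for the $O(d^2)$ adjugate update, the one-line determinant identity, and Euclidean column-wise GCDs for $G_2$. Your telescoping justification of the $O(d)+O(\log m_{1,j})$ per-column cost is in fact more explicit than the paper's one-line assertion; the only slip is the phrasing ``adding a scalar multiple of the $i$-th row'' to a column (left-multiplication by $P$ is a row operation), but the $O(d^2)$ count is unaffected.
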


\begin{proof}[Proof]
For $\Gamma(A^*) = (A^*, \det(A^*), A, \det(A), G_1)$, the critical computation lies in $G_1$. From the adjugate matrix property:
\[
(A^* \cdot G) \cdot A^T = \det(A) \cdot I_d
\]
We derive through matrix normalization:
\[
A^* \cdot \left(G \cdot A^T \cdot \frac{\det(A^*)}{\det(A)}\right) = \det(A^*) \cdot I_d
\]
Thus $\adj(A^*) = G \cdot A^T \cdot \frac{\det(A^*)}{\det(A)}$. This identity yields $G_1 = G \cdot \frac{\det(A^*)}{\det(A)}$, establishing the $O(d)$ complexity for $\Gamma(A^*)$.

Let $B = A[(i\to\gamma)]$ with $B^* \cdot G_2 = \adj(B)^T$. The determinant relations:
\[
\det(B) = k_i \det(A), \quad \det(B^*) = \frac{(k_i \det(A))^{d-1}}{\det(G_2)}
\]
combine with Lemma~\ref{PDBarv_lemma} to show $\adj(B)^T$ requires $O(d^2)$ operations. The Euclidean algorithm computes $G_2$ in $O(d\log m_1)$ operations due to coefficient bounds in $m_1$, while $B^*$ needs $O(d^2)$ operations. The total complexity follows.
\end{proof}

Although we need to compute \(ind(A^*)\), we also obtain the adjugate matrix and the matrix determinant required for the subsequent steps of the LLL algorithm.

\subsection*{Acceleration via the $\|\cdot\|_\infty$-norm}

Let $\delta_1,\dots,\delta_d$ be the reduced basis for $L(A^{-1})$. The LattE software package finds the smallest vector $\beta$ among the $\delta_i$ under the $\|\cdot \|_\infty$-norm. Through empirical experiments, we observe that selecting the smallest vector $\beta'$ among $\delta_i$ with respect to the $\|\cdot\|_1$-norm generates fewer unimodular cones than the $\|\cdot\|_\infty$-norm approach. We provide two justifications for this phenomenon:

\begin{enumerate}
    \item When $|\det(A)|$ is large, the $k_i$ for $\beta$ is very small. The vector $\beta'$ remains admissible since $\|\beta'\|_\infty \leq d \|\beta\|_\infty$, which ensures rapid index reduction.

    \item If $|k_i| > 0.5$, we adjust $\beta'$ via $\beta' \pm e_i$ to enforce $|k_i| < 0.5$. This update corresponds to adding or subtracting the $i$-th column of $A$ to $\gamma'$ (see Line 9 in Alg.~\ref{alg-PDB}). Such adjustment guarantees at least a halving of the index. This idea applies to $\beta$, See Example \ref{exa-infty} below.
\end{enumerate}

\begin{example}\label{exa-infty} Let $A$ be the nonsingular matrix given by
$$A=\begin{pmatrix}
3 & 1 & -2 & 0 & 0 & -4 & -1 & 0 \\
9 & 0 & 7 & 0 & 2 & -10 & -5 & 0 \\
10 & 3 & 2 & 0 & 2 & -3 & -5 & -4 \\
-9 & -3 & 2 & 1 & -1 & 4 & 3 & 3 \\
-8 & -2 & -4 & 0 & -2 & 9 & 4 & 1 \\
-3 & -4 & 4 & 0 & -1 & -4 & 1 & 4 \\
-4 & -3 & 6 & 0 & 0 & 4 & 1 & 1 \\
-8 & -2 & -2 & -1 & -1 & -3 & 4 & 5
\end{pmatrix}.$$
Then the smallest vector $\beta$ is given by $\beta^T=\begin{pmatrix}
1/3 & -1/3 & 0 & -1/3 & 0 & -2/3 & 1/3 & -1/3
\end{pmatrix}$, which has $\|\beta\|_\infty=2/3>0.5$.
\end{example}

\subsection{Computational Experiments}\label{sec:expts}
In the practical implementation of the PDBarv, we have adopted the accelerations as mentioned above.

We present a computational comparison of the PDBarv algorithm against two established approaches: the Primal Irrational Barvinok algorithm~\cite{Primal-irrational} and the Dual Barvinok algorithm \cite{Dual-barvinok}, for generating function computation of cones. The latter two algorithms were implemented using version $1.7.5$ of the LattE software package.
To ensure a fair comparison, our implementation uses the same NTL library version as LattE, with consistent configurations for the LLL algorithm component.

\subsection*{Random experiments}

In this set of experiments, we randomly generate cones $K=\cone(A)$, where $A$ is a non-singular matrix whose entries are chosen uniformly from $[-100, 100]$ for dimensions $d$ between $4$ and $6$, and from $[-30, 30]$ for $d=7$.

Table~\ref{tab:decomp} summarizes the outcomes. For each experiment Ex.$i$ ($i=1,2,\dots, 11$), the matrix $A_i$ is stored in file Ex.$i$. The ``PDBarv'' column lists both the number of unimodular cones and the runtime produced by the algorithm; these data are identical whether decomposing $K_i$ or $K_i^*$. The column ``min-LattE'' displays the minimum between: (i) the result from applying the Primal Irrational algorithm to $K_i$, and (ii) the result from applying the Dual Barvinok algorithm to $K_i^*$ within the LattE implementation.

These experiments reveal two distinct efficiency regimes dictated by the index relationship.

 \begin{itemize}
\item When \(\ind(K) \leq \ind(K^*)\), PDBarv outperforms the Primal Irrational Barvinok algorithm,
      while both methods exhibit significantly higher efficiency than the Dual Barvinok algorithm.
\item When \(\ind(K) > \ind(K^*)\), PDBarv maintains clear superiority over the Dual Barvinok algorithm,
      whereas the Primal Irrational Barvinok algorithm demonstrates substantially slower performance.
 \end{itemize}

To eliminate potential anomalies, we performed multiple computational trials across various dimensions, carefully documenting the associated performance metrics. For each trial, we present a scatter plot in Figure~\ref{fig:performance}, where the $x$-axis represents the dimension $d$, and the $y$-axis corresponds to the ratio $\frac{\text{number of cones by min-LattE}}{\text{number of cones by PDBarv}}$.

\begin{table}[H]
    \centering
    \caption{Number of unimodular cones and runtime for PDBarv versus min-LattE across various dimensions.}\label{tab:decomp}
    \begin{tabular}{@{}lc>{\centering\arraybackslash}p{2.5cm}>{\centering\arraybackslash}p{2.5cm}>{\centering\arraybackslash}p{2.5cm}>{\centering\arraybackslash}p{2.5cm}@{}}
        \toprule
        Example & Dimension & \multicolumn{2}{c}{PDBarv (Alg.~\ref{alg-PDB})}  & \multicolumn{2}{c}{min-LattE} \\
        \cmidrule{3-6}
        & & Unimodular & Time(s) & Unimodular  & Time(s) \\
        \midrule
        Ex.1 & 4 & 1,504 & 0.08 & 1,885 & 0.08 \\
        Ex.2 & 4 & 1,739 & 0.09 & 2,007 & 0.08 \\
        Ex.3 & 4 & 2,392 & 0.12 & 2,940 & 0.12 \\
        Ex.4 & 5 & 43,547 & 2.8 & 55,107 & 3.3 \\
        Ex.5 & 5 & 59,478 & 4.0 & 73,168 & 4.4 \\
        Ex.6 & 5 & 81,060 & 4.9 & 111,955 & 6.7 \\
        Ex.7 & 6 & 1,734,263 & 150 & 2,608,010 & 238 \\
        Ex.8 & 6 & 3,047,166 & 260 & 5,281,072 & 453 \\
        Ex.9 & 6 & 4,064,428 & 345 & 6,084,859 & 527 \\
        Ex.10 & 7 & 8,859,812 & 801 & 15,940,441 & 1990 \\
        Ex.11 &7  & 9,164,539 & 852 & 16,084,606 & 1995 \\
        \bottomrule
    \end{tabular}
\end{table}

\begin{figure}[H]
    \centering
    \includegraphics[width=0.7\linewidth]{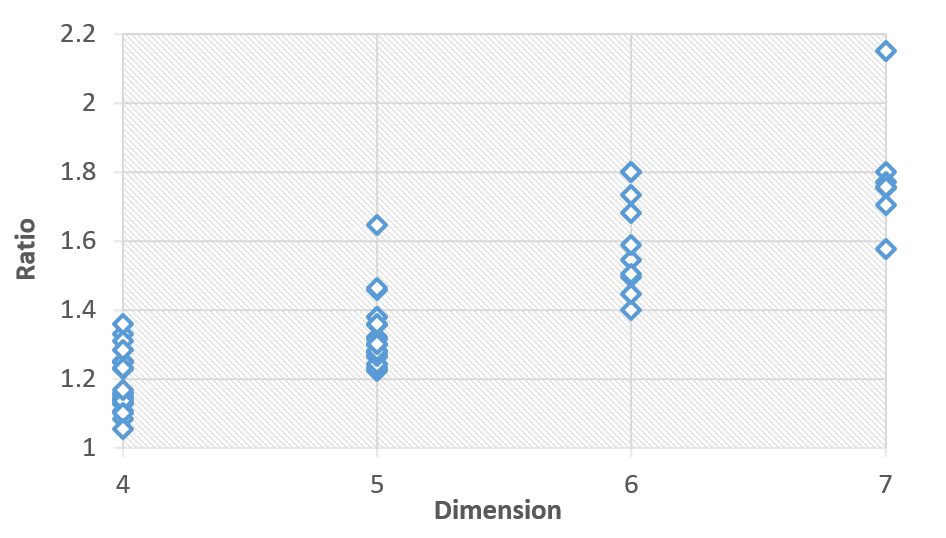}
    \caption{Cones cardinality ratios for PDBarv}
    \label{fig:performance}
\end{figure}

We primarily compare performance using the cardinality of the resulting unimodular cones.
As visually reinforced by Figure~\ref{fig:performance}, PDBarv demonstrates dimension-dependent performance improvements over LattE: approximately 15\% in dimension 4, 24\% in dimension 5, 37\% in dimension 6, and 44\% in dimension 7. The inherent $\|\cdot \|_1$-norm minimization in this metric explains the observed performance hierarchy.

\medskip
Based on our randomized tests, PDBarv almost always reduces to either PBarv or DBarv. Specifically, the alternation between primal and dual spaces occurs only at the tail ends of the decomposition. This behavior arises because the equality \(\ind(\cone(A^*)) = \ind(\cone(A))^{d-1}\) holds for most randomly generated \(A\) in our tests.

Nevertheless, in specific parameter regimes, such alternation occurs and yields acceptable performance improvements over PBarv, with both methods employing the \(\|\cdot \|_1\)-norm.

To illustrate, consider three full-dimensional simplicial cones \(K_1 = \cone(A_1)\), \(K_2 = \cone(A_2)\), and \(K_3 = \cone(A_3)\), where \(A_1\), \(A_2\), and \(A_3\) are nonsingular matrices:
   $$A_1=\begin{pmatrix}
3 & 1294707377 & 83497361 & 24919 & 135 \\
18 & 7200713520 & -127285340 & -25220 & -546 \\
-4 & -1294707377 & 43787979 & 301 & 411 \\
-39 & 1294707377 & 83497361 & 50139 & 499 \\
0 & 0 & 127285340 & 25220 & 728 \\
\end{pmatrix},$$
\begin{align*}
   & A_2=\begin{pmatrix}
5 & 8196010307 & 5929589 & 134059 & 319 \\
-20 & 11498526310 & 0 & 0 & 0 \\
-49 & 0 & 25609190 & 0 & 0 \\
16 & 0 & 0 & 135978 & 0 \\
15 & 0 & 0 & 0 & 346 \\
\end{pmatrix},  \\
   & A_3=\begin{pmatrix}
35 & 10636123 & 204109 & 4559 & 809 & 23 \\
33 & 53578602 & 0 & 0 & 0 & 0 \\
25& 0 & 695826 & 23994 & 558 & 18 \\
24& 0 & 0 & 35991 & 0 & 0 \\
32& 0 & 0 & 0 & 837 & 0 \\
11& 0 & 0 & 0 & 0 & 27 \\
\end{pmatrix}.
\end{align*}
The computation data is presented as follows,
$$
\begin{array}{r|r|r}
\text{cone} & \text{number of cones for PDBarv} & \text{number of cones for PBarv}\\\hline
K_1 & 285,197  & 369,484   \\
K_2 & 612,161  & 703,272  \\
K_3 & 4,768,283  & 5,190,608
\end{array},
$$
which shows the advantages of our PDBar algorithm.
\section{Concluding Remark}
We introduce a novel cone decomposition formula. This formula yields a more concise representation while enhancing geometric intuition and simplifying algorithmic implementation. Building on this foundation, we develop the PDBar algorithm, which strategically chooses to decompose in either the primal space or the dual space based on the minimum of the index of a cone and that of its dual. Owing to the explicit form of the formula, we anticipate applications to parametric simplicial cones.

We observe that in identifying suitable short vectors $\gamma$, the strategy employing the $\|\cdot\|_1$-norm outperforms that using the $\|\cdot\|_\infty$-norm. This observation is supported by our computer experiments; however, a proof remains elusive.

\section{Appendix}
\begin{appendix*}[Restatement of Proposition \ref{int_Dec})] \label{appendix1}
Let $A = (\alpha_1, \alpha_2, \ldots, \alpha_n)$ generate an $n$-dimensional rational simplicial cone $K = \cone(A)$ in $\mathbb{R}^d$. For any point $\gamma = \sum_{i=1}^r k_i\alpha_i \in K$ with $k_i > 0$, there exists a decomposition:
\begin{equation*}
    \sigma(K) = \sum_{i=1}^r \sigma(K_i),
\end{equation*}
where each subcone is defined as $K_i = \cone^{[i-1]}\left(A[(i \to \gamma)]\right)$.
\end{appendix*}

\begin{proof}[Proof]
We first normalize $\gamma$ to $\sum_{i=1}^r \alpha_i$ by scaling the generators appropriately. Consider an arbitrary point:
\[
\beta = \sum_{i=1}^n t_i\alpha_i \in K \quad \text{with } t_i \geq 0.
\]
Define $t_\ell := \min\{t_1, \ldots, t_r\}$ and select the smallest index $\ell$ achieving this minimum. This induces a partition:
\[
K = \bigsqcup_{\ell=1}^r C_\ell,
\]
where $C_\ell$ consists of points where $t_\ell$ is the first minimal coefficient among $t_1, \ldots, t_r$. Specifically:
\[
C_\ell = \left\{ \sum_{i=1}^n t_i\alpha_i \;\Big|\; t_j > t_\ell \text{ for } j < \ell,\ t_j \geq t_\ell \text{ for } \ell<j \leq r \right\}.
\]

Rewriting $\beta \in C_\ell$ using the modified generator $\gamma$:
\begin{align*}
\beta &= \sum_{j=1}^{\ell-1} (t_j - t_\ell)\alpha_j + \sum_{j=\ell+1}^r (t_j - t_\ell)\alpha_j + t_\ell\gamma + \sum_{s=r+1}^n t_s\alpha_s.
\end{align*}
This expression reveals that $C_\ell$ corresponds precisely to the half-open cone $K_\ell = \cone^{[\ell-1]}(A[(\ell \to \gamma)])$.
The disjoint union ensures the desired decomposition.
\end{proof}

\begin{appendix*}[Restatement of Lemma \ref{thm_lowd_dec})]\label{appendix2}
Let $A = (\alpha_1, \ldots, \alpha_n)$ generate an $n$-dimensional rational simplicial cone $K = \cone(A)$. Then
\begin{equation}
    \sigma(K) = \sum_{i=1}^{r-1} \sigma(F_i) + \sigma\left(\cone^{[r-1]}(A)\right),
\end{equation}
where each face $F_i$ is defined as $F_i = \cone^{[i-1]}\big(A[(i \to 0)]\big)$ for $1 \leq i \leq r-1$.
\end{appendix*}

\begin{proof}[Proof]
It suffices to prove that
\begin{equation}
    K \setminus \cone^{[r-1]}(A) = \bigsqcup_{i=1}^{r-1} F_i.
\end{equation}

For any point $\beta = \sum_{j=1}^n t_j\alpha_j$ in this complement set, let $i_0(\beta)$ denote the smallest index $i \leq r-1$ satisfying $t_i = 0$. This indexing induces a partition of the complement into disjoint faces:
\begin{itemize}
    \item $F_1$ contains points with $t_1 = 0$ and $t_j > 0$ for $j < 1$ (vacuously true)
    \item $F_i$ contains points with $t_i = 0$ and $t_j > 0$ for $1 \leq j < i$ when $i \geq 2$
    \item All $t_k \geq 0$ for $k > i$
\end{itemize}

The disjointness follows from the uniqueness of the first vanishing coordinate $i_0(\beta)$. Since the signature function $\sigma$ is additive over disjoint measurable sets, the decomposition holds by construction.
\end{proof}

\newpage
\bibliographystyle{IEEEtran}
\small\bibliography{reference}

\begin{thebibliography}{10}
\providecommand{\url}[1]{#1}
\csname url@samestyle\endcsname
\providecommand{\newblock}{\relax}
\providecommand{\bibinfo}[2]{#2}
\providecommand{\BIBentrySTDinterwordspacing}{\spaceskip=0pt\relax}
\providecommand{\BIBentryALTinterwordstretchfactor}{4}
\providecommand{\BIBentryALTinterwordspacing}{\spaceskip=\fontdimen2\font plus
\BIBentryALTinterwordstretchfactor\fontdimen3\font minus
  \fontdimen4\font\relax}
\providecommand{\BIBforeignlanguage}[2]{{%
\expandafter\ifx\csname l@#1\endcsname\relax
\typeout{** WARNING: IEEEtran.bst: No hyphenation pattern has been}%
\typeout{** loaded for the language `#1'. Using the pattern for}%
\typeout{** the default language instead.}%
\else
\language=\csname l@#1\endcsname
\fi
#2}}
\providecommand{\BIBdecl}{\relax}
\BIBdecl

\bibitem{Pick_thm}
G.~Pick, ``Geometrisches zur zahlenlehre,'' \emph{Sitzenber. Lotos (Prague)},
  vol.~19, pp. 311--319, 1899.

\bibitem{Primal-barvinok}
A.~I. Barvinok, ``A polynomial time algorithm for counting integral points in
  polyhedra when the dimension is fixed,'' \emph{Mathematics of Operations
  Research}, vol.~19, no.~4, pp. 769--779, 1994.

\bibitem{Brion-Thm-and-dual-trick}
M.~Brion, ``Points entiers dans les polyedres convexes,'' in \emph{Annales
  scientifiques de l'{\'E}cole normale sup{\'e}rieure}, vol.~21, no.~4, 1988,
  pp. 653--663.

\bibitem{Triangulations-of-cone}
J.~A. De~Loera, \emph{Triangulations of polytopes and computational
  algebra}.\hskip 1em plus 0.5em minus 0.4em\relax Cornell University, 1995.

\bibitem{Dual-barvinok}
A.~Barvinok and J.~E. Pommersheim, ``An algorithmic theory of lattice points in
  polyhedra,'' \emph{New perspectives in algebraic combinatorics}, vol.~38, pp.
  91--147, 1999.

\bibitem{Beck-book-computing-Discre}
M.~Beck and S.~Robins, \emph{Computing the Continuous Discretely: Integer-Point
  Enumeration in Polyhedra, second edition}.\hskip 1em plus 0.5em minus
  0.4em\relax Springer, 2015.

\bibitem{ehrhart1977}
E.~Ehrhart, ``Polyn{\^o}mes arithm{\'e}tiques et m{\'e}thode des poly{\`e}dres
  en combinatoire.'' 1977.

\bibitem{LLL1982}
A.~K. Lenstra, H.~W. Lenstra, and L.~Lov{\'a}sz, ``Factoring polynomials with
  rational coefficients,'' 1982.

\bibitem{De-loear-LLLwork-finish-Bar}
J.~A. De~Loera, R.~Hemmecke, J.~Tauzer, and R.~Yoshida, ``Effective lattice
  point counting in rational convex polytopes,'' \emph{Journal of symbolic
  computation}, vol.~38, no.~4, pp. 1273--1302, 2004.

\bibitem{Primal-irrational}
M.~K{\"o}ppe, ``A primal barvinok algorithm based on irrational
  decompositions,'' \emph{SIAM Journal on Discrete Mathematics}, vol.~21,
  no.~1, pp. 220--236, 2007.

\bibitem{Half-open-dec}
M.~K{\"o}ppe and S.~Verdoolaege, ``Computing parametric rational generating
  functions with a primal barvinok algorithm,'' \emph{arXiv preprint
  arXiv:0705.3651}, 2007.

\bibitem{simplecone}
G.~Xin, X.~Xu, and Z.~Zhang, ``A combinatorial simplicial cone decomposition,''
  \emph{arXiv preprint arXiv:2501.06691}, 2025.

\bibitem{Polarity-of-cone}
J.~Lawrence, ``Valuations and polarity,'' \emph{Discrete \& computational
  geometry}, vol.~3, no.~4, pp. 307--324, 1988.

\bibitem{cl2013LLL}
C.~Ling, W.~H. Mow, and N.~Howgrave-Graham, ``Reduced and fixed-complexity
  variants of the {LLL} algorithm for communications,'' \emph{IEEE Transactions
  on Signal Processing}, vol.~56, no.~10, pp. 4791--4805, Oct 2008.

\bibitem{ORTextbookCompilationGroup1990}
{Tsinghua University Operations Research Textbook Compilation Group},
  \emph{Operations Research [{Y\`{u}nch\'{o}uxu\'{e}}]}.\hskip 1em plus 0.5em
  minus 0.4em\relax Beijing, China: Tsinghua University Press, 1990.

\end{thebibliography}

\end{document}